\documentclass[12pt]{article}
\usepackage[utf8]{inputenc}

\usepackage{amsmath}
\usepackage{amssymb}
\usepackage{amsthm}
\usepackage{mathrsfs}
\usepackage{mathtools}
\usepackage{tikz}
\usepackage{cleveref}
\usepackage{tikz-cd}
\usepackage{mathtools}
\usepackage{verbatim}
\usepackage{appendix}
\usepackage{xcolor}
\usepackage[margin = 1 in]{geometry}

\usepackage{setspace}
\doublespacing

\newcommand{\rood}[1]{\textcolor{red}{[#1]}}

\theoremstyle{definition}
\newtheorem{thm}{Theorem}[section]
\crefname{thm}{Theorem}{Theorems}

\newtheorem{prop}[thm]{Proposition}
\crefname{prop}{Proposition}{Propositions}
\newtheorem{lem}[thm]{Lemma}
\crefname{lem}{Lemma}{Lemmas}

\newtheorem{conj}[thm]{Conjecture}

\crefname{defn}{Definition}{Definitions}

\newtheorem{exmp}[thm]{Example}

\newtheorem{rmk}[thm]{Remark}

\newtheorem*{ack*}{Acknowledgements}

\newcommand{\on}{\operatorname}
\newcommand{\mb}{\mathbb}
\newcommand{\ol}{\overline}
\newcommand{\mc}{\mathcal}

\newcommand{\mf}{\mathfrak}

\newcommand{\wt}{\widetilde}

\newenvironment{polynomial}
  {\par\vspace{\abovedisplayskip}%
   \setlength{\leftskip}{\parindent}%
   \setlength{\rightskip}{\leftskip}%
   \medmuskip=4mu plus 2mu minus 2mu
   \binoppenalty=0
   \noindent$\displaystyle}
  {$\par\vspace{\belowdisplayskip}}

\title{Divisors on the moduli space of curves from divisorial conditions on hypersurfaces}
\author{Dennis Tseng}
\date{\today}

\begin{document}

\maketitle

\begin{abstract}
In this note, we extend work of Farkas and Rim\'anyi on applying quadric rank loci to finding divisors of small slope on the moduli space of curves by instead considering all divisorial conditions on the hypersurfaces of a fixed degree containing a projective curve. This gives rise to a large family of virtual divisors on $\ol{\mc{M}_g}$. We determine explicitly which of these divisors are candidate counterexamples to the Slope Conjecture. The potential counterexamples exist on $\ol{\mc{M}_g}$, where the set of possible values of $g\in \{1,\ldots,N\}$ has density $\Omega(\log(N)^{-0.087})$ for $N>>0$. Furthermore, no divisorial condition defined using hypersurfaces of degree greater than 2 give counterexamples to the Slope Conjecture, and every divisor in our family has slope at least $6+\frac{8}{g+1}$. 
\end{abstract}

\section{Introduction}
Given an effective divisor $D$ on the moduli space of curves $\ol{\mc{M}_g}$, there is an associated number $s(D)$ called the \emph{slope} of the divisor. There has been significant interest in finding effective divisors on $\ol{\mc{M}_g}$ of small slope in order to give an upper bound for the slope
\begin{align*}
   s(\ol{\mc{M}_g})=\inf\{s(D): D\text{ is effective}\}
\end{align*}
of $\ol{\mc{M}_g}$. The slope $s(\ol{\mc{M}_g})$ gives information about the effective cone of $\ol{\mc{M}_g}$ and upper bounds on $s(\ol{\mc{M}_g})$ have been applied to show $\ol{\mc{M}_g}$ is general type for sufficiently large $g$ \cite{HM82,H84,EH87,F09,FOTN18,JP18,FJP20}. 

To tell whether our computed slope $s(D)$ is small, the standard is to compare it with the Slope Conjecture \cite[Conjecture 0.1]{HM90} stated by Harris and Morrison
\begin{conj}
The slope $s(\ol{\mc{M}_g})$ is always at least $6+\frac{12}{g+1}$. 
\end{conj}
If $g+1$ is composite, then the Brill-Noether divisors achieve a slope of $6+\frac{12}{g+1}$. Since then, there have been counterexamples discovered for the Slope Conjecture for some values of $g$ \cite{FP05,F06,Koszul,Quadric,Khosla}. Given the difficulty in constructing the examples, there is value in understanding what sorts of divisors should give rise to small slopes. 

In this note, we focus on extending the methods of Farkas and Rim\'anyi \cite{Quadric}, which affords a large class of divisors on $\ol{\mc{M}}_g$, given by divisorial conditions on the hypersurfaces containing a projective embedding of genus $g$ curves. 
%There has been much interest in bounding the effective cone of the moduli space of curves $\ol{\mc{M}_g}$. In the study of the effective cone, a fundamental invariant of $\ol{\mc{M}_g}$ is the slope $s(\ol{\mc{M}_g})$. Much work has been done in bounding $s(\ol{\mc{M}_g})$ from above by exhibiting special effective divisors on $\ol{\mc{M}_g}$ to show general typeness \cite{HM82,H84,EH87,F09,FOTN18,JP18} and to find counterexamples to the Slope Conjecture \cite[Conjecture 0.1]{HM90} stated by Harris and Morrison \cite{FP05,F06,Koszul,Quadric,Khosla}. Despite this, many basic questions are still open. For example, we do not understand how $s(\ol{\mc{M}_g})$ behaves asymptotically \cite[Problem 0.1]{CFM13}.
In \cite{Quadric}, the authors fixed $g,r,d$ so the Brill Noether number $\rho=g-(r+1)(r-d+g)$ is $0$ and asked for nondegenerate curves $C\to \mb{P}^r$ of degree $d$ and genus $g$ to either lie on a quadric of low rank or be contained in a degenerate pencil of quadrics. When either of these two conditions is a divisorial condition on the space of quadrics containing $C$, one gets a (virtual) divisor on $\ol{\mc{M}}_g$. The authors exhibited infinitely many examples of potential counterexamples to the Slope Conjecture and verified the potential counterexamples were actual divisors in small cases using \emph{Macaulay} \cite[Section 7]{Quadric}.

Our contribution is twofold. First, we show their argument can both be easily simplified and generalized to apply to \emph{any} divisorial condition on the hypersurfaces of degree $m\geq 2$ containing a curve (see \Cref{divisorhypersurface}). Second, we use the formulas to deduce three results (see \Cref{main}):
\begin{enumerate}
    \item We show the slopes of all our divisors are all bounded below by $6+\frac{8}{g+1}$. This gives evidence that $s(\ol{\mc{M}}_g)$ approaches $6$ as $g\to \infty$ in the context of \cite[Problem 0.1]{CFM13}.
    \item Only divisors defined using quadrics (instead of hypersurfaces of higher degree) can give counterexamples to the Slope Conjecture.
    \item We give virtual divisors that are candidate counterexamples to the Slope Conjecture on $\ol{\mc{M}_g}$ for all $g=(r+1)s$ with $\frac{r^2+1}{3r-1}< s\leq \frac{r}{2}$.
\end{enumerate}
As intuition, from our calculations it appears that simpler is better for small slope. This is supported by the second point above regarding divisors defined using hypersurfaces of higher degree. Furthermore, it also appears that among conditions on quadrics, the condition of simply being contained in a single quadric gives the best slope (see \Cref{quadricwin}). 

Perhaps another source of intuition can be gained by looking at $K3$ surfaces. Namely, it is known that any divisor violating the Slope Conjecture must contain all genus $g$ curves occurring as a hyperplane section of a $K3$ surface in $\mb{P}^g$ of degree $2g-2$ \cite[Proposition 2.2]{FP05}. While \cite[Theorem 1.7]{FP05} gives such a condition involving quadrics when $g=10$, it seems more difficult to produce conditions of higher degree that are always satisfied by such curves. 

\subsection{Statement of results}
\subsubsection{Definition of slope}
We recall for $g\geq 3$ \cite[Theorem 1]{AC87},
\begin{align*}
    A^{\bullet}(\ol{\mc{M}_{g}})\otimes \mb{Q}=\mb{Q}\lambda\oplus \bigoplus_{i=0}^{\lfloor \frac{g}{2}\rfloor}{\mb{Q}\delta_i}.
\end{align*}
Given an effective divisor $D=a\lambda -\sum_{i=0}^{\lfloor \frac{g}{2}\rfloor}{b_i\delta_i}$ on $\ol{\mc{M}_g}$ with $a,b_i> 0$, define the slope 
\begin{align*}
    s(D) &= \frac{a}{\min\{b_i: 0\leq i\leq \lfloor \frac{g}{2}\rfloor\}}.
\end{align*}
If $a,b_i$ are not all positive, then we define $s(D)=\infty$. Define $s(\ol{\mc{M}_g})$ to be the infimum of $s(D)$ as $D$ varies over all effective divisors. 

Even though this is not standard, we will similarly define
\begin{align*}
    s_0(D) &= \begin{cases}
    \frac{a}{b_0} \qquad\text{if $a,b_0>0$}\\
    \infty \qquad\text{otherwise}\\
    \end{cases}\\
    s_0(\ol{\mc{M}_g})&=\inf\{s_0(D): D\text{ is effective}\}.
\end{align*}

Clearly, $s_0(D)\leq s(D)$ and $s_0(\ol{\mc{M}_g})\leq s(\ol{\mc{M}_g})$. Conjecturally, $s_0(\ol{\mc{M}_g})= s(\ol{\mc{M}_g})$ \cite[Conjecture 1.5]{FP05}, and this has been verified for $g\leq 23$ \cite[Theorem 1.4]{FP05}. For technical reasons (see \Cref{setup}), we will work with $s_0(D)$ instead of $s(D)$, which means our candidate counterexamples to the Slope Conjecture have only been checked with the coefficients of $\lambda$ and $\delta_0$. The slopes computed in \cite{Quadric} also are only computed with $s_0(D)$ instead of $s(D)$, and it would be interesting to find a way to compute the other coefficients. 
\subsubsection{Definition of the divisors}
\label{Definitions}
We will work with $\ol{\mc{M}_g}$ as a Deligne-Mumford stack instead of a coarse moduli space, but the distinction does not matter for the statement of \Cref{main}. We will work over $\mb{C}$, but see \Cref{char} for more on characteristic assumptions on the base field. Fix $r,g,d$ such that the Brill-Noether number $\rho:= g-(r+1)(g-d+r)$ is zero. Equivalently, we have $s\geq 1,r\geq 1$ such that $g=(r+1)s$, $d=(s+1)r$. Since we are interested in the hypersurfaces containing a curve $C\to\mb{P}^r$, we also assume $r\geq 3$. Given an integer $m\geq 2$ such that $\binom{r+m}{m}\geq md-g+1$, fix a divisor $D\subset \on{Hom}(\on{Sym}^m\mb{C}^{r+1},\mb{C}^{md-g+1})$ invariant under the action of $GL(\mb{C}^{r+1})\times GL(\mb{C}^{md-g+1})$. We give examples of such invariant divisors in \Cref{sec:example} and give more about the general classification in \Cref{classdivisors}.

Let $\mc{M}_g^{\on{irr}}\subset \ol{\mc{M}_g}$ denote the open substack parameterizing irreducible curves of genus $g$. In $\mc{M}_g^{\on{irr}}$, consider the locus $Z^{m,r,s}_g$ consisting of curves $C$ for which there exists a line bundle $L$ of degree $d$ mapping $C\to \mb{P}^r$ such that the induced map
\begin{align*}
    H^0(\mb{P}^r,\mathscr{O}_{\mb{P}^r}(m))\to H^0(C,L^{\otimes m})
\end{align*}
is given by a map in $D$ after choosing bases for $H^0(L)$ and $H^0(C,L^{\otimes m})$. Since $D$ is invariant under $GL(\mb{C}^{r+1})\times GL(\mb{C}^{md-g+1})$, this definition is independent of choice of bases. 

Now, take the closure of $Z_{m,r,s}$ in $\ol{\mc{M}}_g$ to get $D_{m,r,s}\subset \ol{\mc{M}}_g$. If $Z_{m,r,s}$ is not dense, then $D_{m,r,s}$ is a divisor and we can compute its slope. Otherwise, we only know its slope as a virtual divisor. In either case, we can define $s(D_{m,r,s})$ and $s_0(D_{m,r,s})$ as above. 

\subsubsection{Main results}
Now, we state our main theorem

\begin{thm}
\label{main}
The slope $s_0(D_{m,r,s})$ is independent of the choice of the $GL(\mb{C}^{r+1})\times GL(\mb{C}^{md-g+1})$-invariant divisor $D\subset \on{Hom}(\on{Sym}^m\mb{C}^{r+1},\mb{C}^{md-g+1})$ given $m\geq 2,r\geq 3,s\geq 1$. Furthermore
\begin{enumerate}
    \item If $m\geq 3$, $s_0(D_{m,r,s})\geq 6+\frac{12}{g+1}$, so considering hypersurfaces other than quadrics will not yield counterexamples to the Slope Conjecture. Equality holds if and only if $(m,r,s)=(3,3,2)$. 
    \item We have $s_0(D_{2,r,s})> 6+\frac{8}{g+1}$.
    \item We have $s_0(D_{2,r,s})< 6+\frac{12}{g+1}$ if and only if $\frac{r^2+1}{3 r-1}<s\leq \frac{r}{2}$. 
\end{enumerate}
\end{thm}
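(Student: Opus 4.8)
The plan is to reduce everything to a single explicit computation of the class of the virtual divisor $D_{m,r,s}$ in terms of $\lambda$ and $\delta_0$, and then extract the three numerical statements by elementary (if tedious) inequality manipulations. First I would establish the independence statement: since $D\subset \on{Hom}(\on{Sym}^m\mb{C}^{r+1},\mb{C}^{md-g+1})$ is $GL(\mb{C}^{r+1})\times GL(\mb{C}^{md-g+1})$-invariant, its class in the equivariant Chow ring (equivalently, in $\on{Pic}$ of the relevant quotient) is an integer multiple of the unique generator — the "expected" degeneracy/determinantal class coming from the two tautological bundles. Pulling this back along the classifying map that sends a point of $\mc{M}_g^{\on{irr}}$ (with a choice of $L$) to the matrix of the multiplication map $H^0(\mc{O}_{\mb{P}^r}(m))\to H^0(L^{\otimes m})$, the only thing that can vary with $D$ is this overall positive multiple, which scales $a$ and $b_0$ equally and hence leaves $s_0$ unchanged. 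This is exactly the simplification of the Farkas–Rim\'anyi argument promised in \Cref{divisorhypersurface}, and I would cite that for the actual divisor-class formula.

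Next I would write down, from \Cref{divisorhypersurface}, the formula for $[D_{m,r,s}] = a\lambda - b_0\delta_0 - \cdots$ as a polynomial in $m,r,s$ (recalling $g=(r+1)s$, $d=(s+1)r$, and the "number of conditions" $md-g+1 = m(s+1)r - (r+1)s + 1$). The coefficients $a$ and $b_0$ will be obtained by the usual test-curve / Grothendieck–Riemann–Roch computation on a family of curves acquiring a node, applied to the tautological exact sequence defining the multiplication map; the $\lambda$-coefficient comes from the Euler characteristic of $L^{\otimes m}$ and the $\delta_0$-coefficient from the local contribution at the node. Both will be rational functions in $m,r,s$ (after clearing the common factor $\binom{r+m}{m}$ or similar). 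Then $s_0(D_{m,r,s}) = a/b_0$ is an explicit rational function $F(m,r,s)$, and each of the three claims becomes: (1) for $m\ge 3$, $F(m,r,s)\ge 6 + \tfrac{12}{g+1}$ with equality iff $(m,r,s)=(3,3,2)$; (2) $F(2,r,s) > 6 + \tfrac 8{g+1}$; (3) $F(2,r,s) < 6+\tfrac{12}{g+1}$ iff $\tfrac{r^2+1}{3r-1} < s \le \tfrac r2$.

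For the inequalities I would clear denominators so that, e.g., claim (2) becomes a polynomial inequality $P(r,s) > 0$ for all integers $r\ge 3$, $s\ge 1$, and similarly (1) and (3). Each such $P$ factors (or can be bounded) in a way that makes the sign manifest: for (3) the difference $F(2,r,s) - \bigl(6+\tfrac{12}{g+1}\bigr)$ should, after clearing positive denominators, be a product of a positive quantity with the linear-in-$s$ factor $(3r-1)s - (r^2+1)$ (giving the lower bound on $s$) times the factor $(r - 2s)$ or $(r/2 - s)$ (giving the upper bound); the content of (3) is precisely that these two factors have opposite sign exactly in the stated range. The condition $s\le r/2$ also happens to be exactly where $Z_{m,r,s}$ can fail to be dense, so it is natural that it reappears here. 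For (1), after substituting $m\ge 3$ I expect the relevant polynomial to be monotone increasing in $m$ and, at $m=3$, to be minimized over $r\ge 3,s\ge 1$ at $(r,s)=(3,2)$, which one checks by hand after reducing to a bounded region (for $r$ or $s$ large the inequality is clearly strict). Throughout, the constraint $\binom{r+m}{m}\ge md-g+1$ (nonnegativity of the expected codimension) must be invoked to know the divisor class is the honest virtual class and not identically zero.

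The main obstacle will be step two: getting the divisor-class formula exactly right, in particular the $\delta_0$-coefficient, since the multiplication map $H^0(\mc{O}(m))\to H^0(L^{\otimes m})$ does not extend to a map of vector bundles over the boundary without care, and one must either work on a suitable space of limit linear series / use the moving lemma on $\mc{M}_g^{\on{irr}}$ and then argue the coefficient of each $\delta_i$ for $i\ge 1$ is irrelevant (which is why we pass to $s_0$, per \Cref{setup}), or push the computation through a Hurwitz-type or Quot-scheme parameter space as in \cite{Quadric}. Once the formula is in hand, steps (1)–(3) are "just" polynomial algebra, and the only subtlety there is handling the finitely many small values of $(r,s)$ — especially the boundary cases $s = \lfloor r/2\rfloor$ and the equality case $(3,3,2)$ — by direct substitution rather than by the asymptotic bound.
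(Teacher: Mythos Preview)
Your proposal is correct and follows essentially the same route as the paper: the independence statement is exactly \Cref{dualloci}, the class on the linear-series stack is pushed forward to $\mc{M}^{\on{irr}}_{g,1}$ (the paper invokes Khosla's precomputed formulas \Cref{pushforward} together with \Cref{GRR} rather than redoing the test-curve computation from scratch), and the three numerical claims are then polynomial inequalities verified by Mathematica in \Cref{grind}. Two small corrections: for Part~(3) the difference $F(2,r,s)-\bigl(6+\tfrac{12}{g+1}\bigr)$ carries only the single sign-controlling factor $(1+r^2+s-3rs)$, and the upper bound $s\le r/2$ comes solely from the dimension constraint $\binom{r+2}{2}\ge 2d-g+1$, not from a second factor; and the polynomial algebra for Part~(1) is messier than a clean monotonicity-in-$m$ argument---the paper handles it by shifting variables (e.g.\ $m\to m+4$, $r\to r+4$, $s\to s+1$) to make positivity manifest and then disposing of the boundary strips $m=3$ and $r=3$ separately.
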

\noindent 
For $N>>0$, the values of $g$ in $\{1,\ldots,N\}$ for which \Cref{main} produces a potential counterexample has density $\Theta(\frac{1}{\log(g)^{\delta}\log(\log(x))^{\frac{3}{2}}})$ by \cite[Corollary 2]{F08}, where $\delta=1-\frac{1+\log(\log(2))}{\log(2)}\approx .086071$.

The main ingredient in the proof of \Cref{main} is \Cref{dualloci} given below. Our proof also involves straightforward, but tedious, formula manipulation using Mathematica, given in \Cref{grind}.

\begin{thm}
\label{dualloci}
Let $D\subset \on{Hom}(\on{Sym}^m\mb{C}^e,\mb{C}^f)$ be a divisor, preserved under the natural actions of $GL(\mb{C}^e)$ and $GL(\mb{C}^f)$. Given vector bundles $\mc{E}$ and $\mc{F}$ of ranks $e$ and $f$ respectively over a scheme $B$ together with a map $\phi: \on{Sym}^m\mc{E}\to \mc{F}$, the class of the virtual divisor supported on points of $B$ over which $\phi$ fiberwise restricts to maps in $D$ is a positive rational multiple of
\begin{align*}
    ec_1(\mc{F})-mfc_1(\mc{E}).
\end{align*}
\end{thm}

\Cref{dualloci} generalizes \cite[Theorems 1.1 and 1.2]{Quadric} in the setting of divisors, and follows from standard methods of equivariant intersection theory. We note that \cite[Theorems 1.1 and 1.2]{Quadric} are more precise in that their formulas are exact, instead of up to a scalar multiple like \Cref{dualloci}, and that \cite[Theorems 1.1]{Quadric} applies to quadric rank loci, which includes settings where the codimension is greater than 1.

We will be applying \Cref{dualloci} in the case where $B$ is the Deligne-Mumford stack of the moduli space of curves. To do so, one can pull back to enough test curves, for example the test curves given in \cite[Table 3.141]{HM98}. Alternatively, one can pull back to a finite cover of $\ol{\mc{M}}_g$, given in \cite[Lemma 3.89]{HM98} or \cite[Proposition 2.6]{V89}.

\subsection{Example cases and comparison to literature}
\label{sec:example}
\begin{exmp}
If $\binom{r+m}{m}=md-g+1$, then the unique choice of invariant divisor $D\subset \on{Hom}(\on{Sym}^m\mb{C}^{r+1},\mb{C}^{md-g+1})$ consists of linear maps that are not of full rank. This is the locus of curves contained in a degree $m$ hypersurface. In the case $(r,g,d,m)=(4,10,12,2)$ this is the first known counterexample to the Slope Conjecture \cite[Theorem 1.7(4)]{FP05} and this was considered in general by Khosla \cite[Section 3-B]{Khosla}. For the case of $m=2$, it has been checked that the coefficient of each $\delta_i$ for $i>0$ does not contribute to the slope \cite[Theorem 1.4]{Koszul}.
\end{exmp}

\begin{exmp}
The case $(r,g,d,m)=(5,12,15,2)$ was considered in \cite[Section 8]{Quadric}. Given a general genus 12 curve $C$ together with one of its finitely many degree 15 embeddings $C\subset \mb{P}^5$, there is a pencil of quadrics containing it. Pulling back the discriminant hypersurface of singular quadrics yields 6 points (possible non-distinct) on $\mb{P}^1$. To illustrate the independence of the slope on the choice of divisor $D\subset \on{Hom}(\on{Sym}^2\mb{C}^{6},\mb{C}^{19})$ in the statement of \Cref{main}, the following divisorial conditions on those 6 points yield virtual divisors on $\ol{\mc{M}}_{12}$ with the same slopes, each contradicting the Slope Conjecture. To bound the coefficients of $\delta_i$ for $i>0$, one can use \cite[Corollary 1.2]{FP05}.
\begin{enumerate}
    \item 6 points on $\mb{P}^1$, where at least two points coincide. This was considered in \cite[Section 8]{Quadric} and shown to be an actual divisor using \emph{Macaulay}.
    \item
    6 points on $\mb{P}^1$ with an involution.
    \item
    6 points on $\mb{P}^1$ such that 4 of them have a fixed choice of moduli.
    \item
    6 points on $\mb{P}^1$ that arise as the image of 6 points on $\mb{P}^2$ under a linear map $\mb{P}^2\dashrightarrow \mb{P}^1$. It is not necessary for the 6 points to be general. For example, it suffices for 5 of them to be in general linear position. 
\end{enumerate}
\end{exmp}

\begin{exmp} Let $m=2$. 
If $r=9\ell-2$ and $s=4\ell-1$, this recovers \cite[Theorem 7.1]{Quadric}, and similarly if $r=8\ell+3$ and $s=3\ell+1$, this recovers \cite[Theorem 7.2]{Quadric}. The authors state the result in terms of $s(D_{2,9\ell-2,4\ell-1})$ and $s(D_{2,8\ell+3,3\ell+1})$, but they also only computed $s_0(D_{2,9\ell-2,4\ell-1})$ and $s_0(D_{2,8\ell+3,3\ell+1})$.
\end{exmp}

\begin{exmp}
The smallest case of \Cref{main} that is new to our knowledge is when $(g,r,d)$ is $(27,8,32)$. Given a line bundle $L$ of degree 32 mapping a genus 27 curve $C\to \mb{P}^8$, we expect $\dim(\on{Sym}^2H^0(C,L))=45$ and $H^0(C,L^{\otimes 2})=38$, so we expect a $\mb{P}^6$ of quadrics containing $C$, and there to be $\frac{\binom{9}{3}\binom{10}{2}\binom{11}{1}}{\binom{1}{0}\binom{3}{1}\binom{5}{2}}=1386$ quadrics of corank at least 3 \cite[Proposition 12(b)]{HT84}. We can choose $D$ to be the divisor where at least two of these points coincide.
\end{exmp}

\begin{exmp}
\label{quadricwin}
If $g=10+6i$ for $i\geq 0$, then \cite[Theorem A]{F06} gives a virtual counterexample to the Slope Conjecture, where the coefficients of $\delta_i$ for $i>0$ are also checked. There are cases where \Cref{main} and \cite[Theorem A]{F06} overlap, and computing the slopes with Mathematica in small cases suggests that the divisor computed in \cite[Theorem A]{F06} will always have smaller slope unless $r=12\ell$, $s=6\ell$, and $\binom{r+2}{2}=2d-g+1$. In this case, $D$ corresponds to curves lying on a quadric hypersurface.
%, considered in \cite[Theorem 1.4]{Koszul}. 
This has been tested for all values of $r<1000$.
\end{exmp}

\begin{exmp}
In the equality case of Part 1 of \Cref{main}, we are looking at genus 8 curves with a degree 9 map $C\to \mb{P}^3$ contained in a cubic surface. This set-theortically contains the Brill-Noether divisor of curves with a $\mathfrak{g}^2_7$. Suppose we have $f: C\to \mb{P}^2$ whose image is a septic plane curve with 7 nodes. The canonical divisor on the image is $4L$, where $L$ is the class of a line in $\mb{P}^2$. The canonical divisor of $C$ is then $4f^{*}L-\sum_{i=1}^{7}{(p_i+q_i)}$, where $p_i,q_i$ are the preimages of the 7 nodes. Pick one of the nodes, for example the node corresponding to $p_7,q_7$. The lines through that node give a $\mf{g}_{5}^1$ on $C$. Subtracting this $\mf{g}_{5}^1$ from the canonical on $C$ gives $3f^{*}L-\sum_{i=1}^{6}{(p_i+q_i)}$, which is the cubics in $\mb{P}^2$ passing through the other 6 nodes of the image of $C$. This gives $C\to \mb{P}^2\dashrightarrow\mb{P}^3$ which yields a degree 9 embedding of $C$ into $\mb{P}^3$ contained in a cubic surface. The class of $C$ on the cubic surface is $7L-2(E_1+\cdots+E_6)$. It is not immediately clear to us, for example, whether curves corresponding to $9L-3(E_1+\cdots+E_6)$ or $11L-4(E_1+\cdots+E_6)$ could also contribute additional components to $D_{3,3,2}$.
\end{exmp}

In light of these examples, it is natural to ask for a classification of the divisors $D\subset \on{Hom}(\on{Sym}^m(V),W)$, which are invariant under the action of $GL(V)\times GL(W)$. In particular, if one wants to show that we can choose $D$ so that $D_{m,r,s}$ in \Cref{main} is not a virtual divisor, then it makes sense to consider the intersection of all such invariant divisors $D$. In \Cref{classdivisors}, we observe that the intersection of all invariant divisors $D$ coincides with a locus of GIT unstable points. 

\subsection{A note on characteristic assumptions}
\label{char}
We will work over $\mb{C}$ for notational convenience, but our proofs are algebraic, so everything automatically extends to when our base field is an algebraically closed field of characteristic zero. 

\Cref{divisorhypersurface} holds independent of characteristic. To extend \Cref{main} to positive characteristic, one would need to check that the setup in \cite{Khosla} or \cite[Section 2]{Koszul} to pushforward classes from the stack parameterizing curves with a linear series to the moduli space of curves can be adapted to positive characteristic. The Picard group $\on{Pic}(\ol{\mc{M}_{g,n}})\otimes\mb{Q}$ is unchanged in positive characteristic \cite{M01}. More seriously, when applying limit linear series arguments in positive characteristic, we want to restrict ourselves to cases where ramification is imposed at at most two points on each component \cite{O14,O18}. For example, since we only compute the coefficients of $\lambda$ and $\delta_0$, \cite[Lemma 4.5]{Khosla} suffices for our use, but Khosla degenerates further to a comb of elliptic curves with a rational backbone in the proof.

\section{Divisors from hypersurfaces}
\label{divisorhypersurface}
The goal of this section is to prove \Cref{dualloci}. For completeness, we also state the dual version of \Cref{dualloci} in \Cref{loci} below.

\begin{thm}
\label{loci}
Let $D\subset \on{Hom}(\mb{C}^e,\on{Sym}^m\mb{C}^f)$ be a divisor, preserved under the natural actions of $GL(\mb{C}^e)$ and $GL(\mb{C}^f)$. Given vector bundles $\mc{E}$ and $\mc{F}$ of ranks $e$ and $f$ respectively over a scheme $B$ together with a map $\phi: \mc{E}\to \on{Sym}^m\mc{F}$, the class of the virtual divisor supported on points of $B$ over which $\phi$ fiberwise restricts to maps in $D$ is a positive rational multiple of
\begin{align*}
    mec_1(\mc{F})-fc_1(\mc{E}).
\end{align*}
\end{thm}

\Cref{loci} is perhaps more natural to state, but we will not need to apply it in this note. Therefore, we will only prove \Cref{dualloci} while noting the proof of \Cref{loci} is completely analogous. 

\subsection{Equivariant intersection theory}
The first step of the proof of \Cref{dualloci} is to identify the answer in terms of the corresponding equivariant class, which is given in \Cref{lem:equivariantreduction} below. While the argument might be standard to experts, we attempt to review the relevant background and give more details. In particular, all our equivariant arguments can in principle be reduced to usual intersection theory.

\subsubsection{Basic properties and construction of equivariant intersection theory}
%Note: actually need [X/G] to be smooth, which is weaker than X being smooth
Given a smooth scheme $X$ with an action of an algebraic group $G$, the equivariant Chow ring $A^{\bullet}_G(X)$ is the integral Chow ring of the quotient stack $[X/G]$ in the sense of operational Chow rings \cite[Proposition 19]{EG98}. More precisely, a class $\alpha \in A^{k}([X/G])$ defines a map $A_{\bullet}(B)\xrightarrow{\alpha_f} A_{\bullet-k}(B)$ for each map $f: B\to [X/G]$ from a scheme into $[X/G]$. As in \cite[Definition 17.1]{F98} and the text right before \cite[Proposition 19]{EG98}, these maps $\alpha_f$ need to satisfy certain compatibility properties with respect to proper pushforward, flat pullback and intersection products. For the purposes of brevity, we will work with this definition of the equivariant Chow ring when possible. However, we will occasionally have to refer to the construction of the equivariant Chow groups in \cite{EG98}, which gives a more concrete description more amenable to computation. The construction roughly goes as follows.

Given a principal $G$-bundle $\mathcal{P}\to B$, we can form the quotient $X\times^{G}\mathcal{P}$, which is an $X$-bundle over $B$. The construction in \cite{EG98} defines the equivariant Chow group $A_{k}^G(X)$ as $A_{k+\dim(B)-\dim(G)}(X\times^{G}\mathcal{P})$, where we choose $\mathcal{P}\to B$ to be a quotient $U\to U/G$ where $U\subset \mathbb{A}^N$ is an open subset in a representation of $G$ whose complement has large codimension and $G$ acts freely on $U$. In our specific situation, $X$ is an affine space and $G$ is a product of general linear groups, so $A_{k}^G(X)\cong A_k^G(\operatorname{pt})$ turns out to be the Chow group of a vector bundle over a product of Grassmannians. The operational Chow ring $A^{\bullet}_G(X)$ defined in \cite[Section 2.6]{EG98}, is isomorphic to the $A_{\bullet}^{G}(X)$ by Poincar\'e duality since $X$ is smooth \cite[Proposition 4]{EG98}. 

A good exposition of this point of view of equivariant intersection theory in terms of approximating $BG$ is given in \cite{A12}. In particular, we can always reduce statements about equivariant intersection theory to usual intersection theory.\footnote{There are some mild conditions required for $X\times^G\mathcal{P}$ to be a scheme, given in \cite[Proposition 23]{EG98}. Since products of general linear groups are special, they are always satisfied in our case.}

\subsubsection{Relevant equivariant Chow rings}
We are interested in the following equivariant Chow rings.

\begin{prop}
\label{prop:presentation}
The ring $A^{\bullet}_{GL(\mb{C}^e)}(\on{pt})$ is isomorphic to $\mb{Z}[c_1,\ldots,c_e]$, generated by the chern classes of the universal vector bundle $[\mb{C}^e/GL(\mb{C}^e)]\to [\on{pt}/GL(\mb{C}^e)]$. Furthermore, pulling back the chern classes under the maps of $[\on{Hom}(\on{Sym}^m\mb{C}^e,\mb{C}^f)/GL(\mb{C}^e)\times GL(\mb{C}^f)]$ to $[\on{pt}/GL(\mb{C}^e)]$ and $[\on{pt}/GL(\mb{C}^e)]$ yields an isomorphism
\begin{align*}
   A^{\bullet}_{GL(\mb{C}^e)\times GL(\mb{C}^f)}(\on{Hom}(\on{Sym}^m\mb{C}^e,\mb{C}^f))&\cong    A^{\bullet}_{GL(\mb{C}^e)}(\on{pt})\otimes_{\mb{Z}}A^{\bullet}_{GL(\mb{C}^f)}(\on{pt})\\
   &\cong  \mb{Z}[c_1^e,\ldots,c_e^e]\otimes_{\mathbb{Z}}\mathbb{Z}[ c_1^f,\ldots,c_f^f].
\end{align*}
\end{prop}

\begin{proof}
For the first statement, $A^{\bullet}([\on{pt}/GL(\mb{C}^e)])$ is a polynomial ring over $\mathbb{Z}$, generated by the chern classes of the universal bundle $[\mb{C}^e/GL(\mb{C}^e)]\to [\on{pt}/GL(\mb{C}^e)]$ \cite[Section 15]{T99}. Put another way, $A^{\bullet}([\on{pt}/GL(\mb{C}^e)])$ is approximated by the Chow ring of the Grassmannian $A^{\bullet}(\on{Gr}(e,N))$ for $N>>0$ from \cite[Section 3.1]{EG98} and \cite[Theorem 2]{EG98}.

The Chow ring of the product 
\begin{align*}
 A^{\bullet}([\on{pt}/GL(\mb{C}^e)]\times [\on{pt}/GL(\mb{C}^f)])&=    A^{\bullet}([\on{pt}/GL(\mb{C}^e)])\otimes_{\mathbb{Z}}A^{\bullet}([\on{pt}/GL(\mb{C}^f)])\\
 &= \mb{Z}[c_1^e,\ldots,c_e^e]\otimes_{\mathbb{Z}}\mathbb{Z}[ c_1^f,\ldots,c_f^f]
\end{align*}
satisfies the K\"unneth formula. To see this, we use the fact stated above that $A^{\bullet}([\on{pt}/GL(\mb{C}^e)])$ and $A^{\bullet}([\on{pt}/GL(\mb{C}^f)])$ are approximated by the Chow rings of the Grassmannians $A^{\bullet}(\on{Gr}(e,N))$ and $A^{\bullet}(\on{Gr}(f,N))$ for $N>>0$. Since Grassmannians are unions of affine spaces, the K\"unneth formula holds in Chow for a product of Grassmannians \cite[Proposition 1]{T14}. 

Since $[\on{Hom}(\on{Sym}^m\mb{C}^e,\mb{C}^f)/GL(\mb{C}^e)\times GL(\mb{C}^f)]$ is a vector bundle over $[\operatorname{pt}/GL(\mb{C}^e)\times GL(\mb{C}^f)]$, we obtain the presentation
\begin{align*}
    A^{\bullet}([\on{Hom}(\on{Sym}^m\mb{C}^e,\mb{C}^f)/GL(\mb{C}^e)\times GL(\mb{C}^f)])& \cong A^{\bullet}([\on{pt}/GL(\mb{C}^e)\times GL(\mb{C}^f)])\\
    & \cong A^{\bullet}([\on{pt}/GL(\mb{C}^e)]\times [\on{pt}/GL(\mb{C}^f)])\\
    &\cong \mb{Z}[c_1^e,\ldots,c_e^e]\otimes_{\mathbb{Z}}\mathbb{Z}[ c_1^f,\ldots,c_f^f].
\end{align*}
In the second isomorphism, we used the isomorphism $[\on{pt}/GL(\mb{C}^e)\times GL(\mb{C}^f)]\cong [\on{pt}/GL(\mb{C}^e)]\times [\on{pt}/GL(\mb{C}^f)]$ as stacks.
\end{proof}

Now that we have the presentation of $   A^{\bullet}_{GL(\mb{C}^e)\times GL(\mb{C}^f)}(\on{Hom}(\on{Sym}^m\mb{C}^e,\mb{C}^f))$ given in \Cref{prop:presentation}, we can now precisely state how to translate from \Cref{dualloci} to an equivariant Chow class. 

\begin{lem}
\label{lem:equivariantreduction}
Let $D\subset \on{Hom}(\on{Sym}^m\mb{C}^e,\mb{C}^f)$ be a divisor, preserved under the natural actions of $GL(\mb{C}^e)$ and $GL(\mb{C}^f)$. Given vector bundles $\mc{E}$ and $\mc{F}$ of ranks $e$ and $f$ respectively over a scheme $B$, let $\mc{D}$ be the divisor $\mc{D}\subset \on{Hom}(\on{Sym}^m\mc{E},\mc{F})$ that restricts fiberwise to $D$.

If $[D]\in A^{\bullet}_{GL_e\times GL_f}(\on{Hom}(\on{Sym}^m\mb{C}^e,\mb{C}^f))$ is given by $a c_1^e+bc_1^f$, then the class of $\mc{D}$ is given by $ac_1(E)+bc_1(F)$.
\end{lem}

\begin{proof}[Proof of \Cref{lem:equivariantreduction}]
The vector bundles $\mc{E}$ and $\mc{F}$ define a map 
\begin{align*}
    B\xrightarrow{f} [\on{pt}/GL(\mb{C}^e)]\times [\on{pt}/GL(\mb{C}^f)]\cong [\on{pt}/GL(\mb{C}^e)\times GL(\mb{C}^f)].
\end{align*}
Under the map $f$, the vector bundle $[\mb{C}^e/GL(\mb{C}^e)\times GL(\mb{C}^f)]\to \left[\on{pt}/GL(\mb{C}^e)\times GL(\mb{C}^f)\right]$ pulls back to $\mc{E}$ and the vector bundle $[\mb{C}^f/GL(\mb{C}^e)\times GL(\mb{C}^f)]\to \left[\on{pt}/GL(\mb{C}^e)\times GL(\mb{C}^f)\right]$ pulls back to $\mc{F}$. 

Pulling back under $f$, we also get the following commutative diagram, where both squares are fiber products.
\begin{center}
\begin{tikzcd}
\mathcal{D} \ar[r] \ar[hook,d] & \left[D/GL(\mb{C}^e)\times GL(\mb{C}^f)\right] \ar[hook, d] \\
\on{Hom}(\on{Sym}^m\mc{E},\mc{F}) \ar[r] \ar[d] & \left[\on{Hom}(\on{Sym}^m\mb{C}^e,\mb{C}^f)/GL(\mb{C}^e)\times GL(\mb{C}^f)\right] \ar[d] \\
B \ar[r,"f"] & \left[\on{pt}/GL(\mb{C}^e)\times GL(\mb{C}^f)\right]
\end{tikzcd}
\end{center}
Thus, the class of $\mc{D}$ is the pullback of the class of the substack $ \left[D/GL(\mb{C}^e)\times GL(\mb{C}^f)\right]$. Since the class of the substack is $a c_1^e+bc_1^f$ pulled back from $\left[\on{pt}/GL(\mb{C}^e)\times GL(\mb{C}^f)\right]$, functorality of chern classes under pullback yields the class of $\mc{D}$ is $ac_1(E)+bc_1(F)$.
\end{proof}

\begin{rmk}
We chose to phrase the proof of \Cref{lem:equivariantreduction} in terms of stacks so the organization is clearer at a high level. However, one could also phrase everything in terms of schemes using the construction of equivariant Chow rings. 

For the benefit of the readers who are interested in how the proof operates at an elementary level, we briefly describe it here. By construction, the class $[D]\in A_{GL_e\times GL_f}^{\bullet}(\on{Hom}(\on{Sym}^m\mb{C}^e,\mb{C}^f))$ is given by the class of $\mc{D}$ in the case $\mc{E}$ and $\mc{F}$ are the pullbacks of the tautological subbundles of $\on{Gr}(e,N)$ and $\on{Gr}(f,N)$ to $\on{Gr}(e,N)\times \on{Gr}(f,N)$ for $N>>0$. 

More generally, if $\mc{E}$ and $\mc{F}$ are subbundles of a trivial bundle, we have an induced map $B\to \on{Gr}(e,N)\times \on{Gr}(f,N)$. Pulling back reduces to the case above. However, if $\mc{E}$ or $\mc{F}$ are not subbundles of a trivial bundle, then it seems less obvious how to proceed. The key step is to pull back to a larger variety $B'\to B$, where $B'$ is an open subset of an affine bundle over $B$. This larger variety will have the same Chow ring in low codimension and will also map to $\on{Gr}(e,N)\times \on{Gr}(f,N)$. This is carried out in the proof of \cite[Proposition 19]{EG98} and spelled out in \cite[Section 2.1]{PGL2}.
\end{rmk}

\subsection{Proof of \Cref{dualloci} up to sign}

Given \Cref{lem:equivariantreduction}, one can apply localization in equivariant intersection theory to compute the class of $[D]\in A^{\bullet}_{GL_e\times GL_f}(\on{Hom}(\on{Sym}^m\mb{C}^e,\mb{C}^f))$. We will do this in \Cref{sec:localization}. 

In this section, we present a simple, elementary argument suggested by Anand Patel that gives \Cref{dualloci} up to sign, when combined with \Cref{lem:equivariantreduction}. This argument is independent of \Cref{sec:localization}, strong enough for the applications to slope in \Cref{main}, and we feel it is useful for those unfamiliar with localization.

\begin{thm}
\label{duallociweak}
Let $D\subset \on{Hom}(\on{Sym}^m\mb{C}^e,\mb{C}^f)$ be a divisor, preserved under the natural actions of $GL(\mb{C}^e)$ and $GL(\mb{C}^f)$. Given vector bundles $\mc{E}$ and $\mc{F}$ of ranks $e$ and $f$ respectively over a scheme $B$ together with a map $\phi: \on{Sym}^m\mc{E}\to \mc{F}$, the class of the virtual divisor supported on points of $B$ over which $\phi$ fiberwise restricts to maps in $D$ is a rational multiple of
\begin{align*}
    ec_1(\mc{F})-mfc_1(\mc{E}).
\end{align*}
\end{thm}

\Cref{duallociweak} is weaker than \Cref{dualloci} since it only says the class is a multiple of $ec_1(\mc{F})-mfc_1(\mc{E})$ instead of a positive multiple. Positivity can also be proven directly in this setting but we feel the proof is cleaner when viewed equivariantly.
%by assuming $\mc{E}$ and $\mc{F}$ are totally split and translating \Cref{principle} in \Cref{sec:localization} to our situation. 

\begin{proof}
In the context of \Cref{duallociweak}, let $\mc{D}\subset \on{Hom}(\on{Sym}^m\mb{C}^e,\mb{C}^f)$ be the divisor that restricts fiberwise to $D$. The map $\phi$ induces a section $\phi: B\to \on{Hom}(\on{Sym}^m\mc{E},\mc{F})$, and the virtual divisor given in \Cref{duallociweak} is the pullback of $\mc{D}$ to $B$. Thus, it suffices to compute the class of $\mc{D}$. 

By \Cref{lem:equivariantreduction}, the class of $\mc{D}$ can be written as $ac_1(\mc{E})+bc_1(\mc{F})$ for some integers $a$ and $b$. It suffices to compute the ratio of $a$ and $b$. Now, let $\mc{L}$ be a line bundle on $B$. Then,
\begin{align*}
    \on{Hom}(\on{Sym}^m\mc{E},\mc{F})\cong \on{Hom}(\on{Sym}^m(\mc{E}\otimes \mc{L}),\mc{F}\otimes \mc{L}^{\otimes m}),
\end{align*}
and the divisor $\mc{D}$ respects this isomorphism. Therefore, we must have
\begin{align*}
    ac_1(\mc{E})+bc_1(\mc{F}) &= ac_1(\mc{E}\otimes\mc{L})+bc_1(\mc{F}\otimes \mc{L}^{\otimes m}),
\end{align*}
meaning $aec_1(\mc{L})+bfmc_1(\mc{L})=0$. Now, choosing for example, $\mc{L}$ to be $\mathscr{O}(1)$ and $B=\mb{P}^1$ shows $ae+bmf=0$. Thus, there is some constant $c$ such that $a=-cmf$ and $b=ce$, finishing the proof.
\end{proof}

\subsection{Proof of \Cref{dualloci} via localization}
\label{sec:localization}

Given \Cref{lem:equivariantreduction}, \Cref{dualloci} follows from \Cref{quot} below. 
%This is equivalent to the corresponding statement about the equivariant class of $D$ given in \Cref{sub}. The analogous statement about maps $\on{Sym}^m\mb{C}^e\to \mb{C}^f$ is given below and follows from \Cref{quot}. 

\label{equivariant}
\begin{lem}
\label{principle}
Let $T$ be a torus acting on an affine space $\mb{A}^N$. Then, the equivariant Chow ring $A^{\bullet}_T(\mb{A}^N)\cong \mb{Z}[t_1,\ldots,t_n]$, where $t_1,\ldots,t_n$ $\mb{Z}$-linearly span the character lattice of $T$. 

If $D\subset \mb{A}^N$ is a $T$-invariant divisor, then it is defined by a polynomial $F(x_1,\ldots,x_N)$ whose monomials have the same weight $\chi$ under the action of $T$. The equivariant class $[D]\in A^{\bullet}_T(\mb{A}^N)$ is $\chi$. 
\end{lem}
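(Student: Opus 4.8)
The plan is to prove \Cref{principle} in two stages, first establishing the structure of the equivariant Chow ring and then identifying the class of the divisor.

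\textbf{Step 1: The equivariant Chow ring of $\mb{A}^N$.} First I would recall that for a linear algebraic group $G$ acting on a scheme $Y$, the equivariant Chow groups $A^{\bullet}_G(Y)$ are defined (following Edidin--Graham and Totaro) via $A^i_G(Y) = A^{i}((Y \times U)/G)$ for $U$ an open subset of a representation of $G$ on which $G$ acts freely and whose complement has high codimension. For $T = (\mb{G}_m)^n$, one takes $U = (\mb{A}^k \setminus 0)^n$ with $T$ acting by scaling coordinate-wise, so $(\text{pt} \times U)/T = (\mb{P}^{k-1})^n$, giving $A^{\bullet}_T(\text{pt}) = \mb{Z}[t_1,\ldots,t_n]$ truncated in degrees up to $k-1$; taking $k \to \infty$ yields the polynomial ring $\mb{Z}[t_1,\ldots,t_n]$, with each $t_j$ the equivariant first Chern class of the line bundle associated to the $j$-th standard character. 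For $Y = \mb{A}^N$ with any linear $T$-action, $(\mb{A}^N \times U)/T$ is a vector bundle over $(\text{pt} \times U)/T$, so by homotopy invariance of Chow groups $A^{\bullet}_T(\mb{A}^N) \cong A^{\bullet}_T(\text{pt}) \cong \mb{Z}[t_1,\ldots,t_n]$. The identification $t_j \leftrightarrow$ a basis of the character lattice is then immediate from the construction.

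\textbf{Step 2: The class of an invariant divisor.} Next I would argue that any $T$-invariant divisor $D \subset \mb{A}^N$ is the vanishing locus of a single polynomial $F \in \mb{C}[x_1,\ldots,x_N]$ (since $\mb{A}^N$ has trivial class group and its coordinate ring is a UFD), and that $T$-invariance of the divisor $D = V(F)$ forces $F$ to be a $T$-semi-invariant: $t \cdot F = \chi(t) F$ for a single character $\chi$, because the $T$-action permutes the irreducible factors of $F$ and $T$ is connected, hence fixes each factor up to scalar, and the product of the resulting characters is $\chi$. Decomposing $\mb{C}[x_1,\ldots,x_N]$ into character eigenspaces, a semi-invariant of weight $\chi$ is exactly a polynomial all of whose monomials have $T$-weight $\chi$. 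Finally, to compute $[D] \in A^1_T(\mb{A}^N)$, I would pass to the Borel construction: $F$ defines a section of the line bundle $\mathcal{L}_\chi$ on $(\mb{A}^N \times U)/T$ whose fiber is the one-dimensional representation of weight $\chi$, and $[D]$ is the first Chern class of $\mathcal{L}_\chi$, which under $A^1_T(\text{pt}) = A^1_T(\mb{A}^N)$ is by definition $\chi$ written in the basis $t_1,\ldots,t_n$. Concretely, if $\chi = \sum_j a_j t_j$ then $[D] = \sum_j a_j t_j$.

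\textbf{Main obstacle.} The conceptual content is light, so the main thing to be careful about is the bookkeeping in Step 2: making precise that ``$T$-invariant divisor'' implies ``$F$ is a semi-invariant'' (as opposed to merely $T$ permuting a zero-dimensional set of components) requires that $T$ be connected so that it cannot permute the finitely many irreducible components of $D$ nontrivially, and then that each homogeneous piece under the $T$-grading is itself in the ideal, forcing $F$ to be a single graded piece. I would also want to note explicitly that everything is taking place inside $A^1$, where the equivariant Chow group is finitely generated and the truncation in the approximation spaces $(\mb{P}^{k-1})^n$ is irrelevant for $k$ large, so no convergence subtlety arises. Once these points are pinned down, the identification $[D] = \chi$ is essentially a restatement of the definition of the equivariant first Chern class.
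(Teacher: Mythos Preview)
Your proof is correct. For Step~1 you spell out explicitly the Edidin--Graham construction that the paper simply cites as standard. For Step~2 your route differs from the paper's: you identify the semi-invariant $F$ as a global section of the equivariant line bundle $\mc{L}_\chi$ on the Borel mixing space and read off $[D]=c_1(\mc{L}_\chi)=\chi$ directly, whereas the paper proceeds by degeneration --- it scales the coordinates of $\mb{A}^N$ so that $F$ specializes (within its fixed weight space) to a single monomial, and then reduces further to the case where $F$ is a coordinate function $x_i$, for which the equivariant class is manifestly the $T$-weight of $x_i$. Your argument is more conceptual and handles the general case in one stroke; the paper's is marginally more elementary in that it avoids the Chern-class interpretation and uses only rational equivalence in a one-parameter family. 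Both are short and standard.
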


\begin{proof}
The statement on $A^{\bullet}_T(\mb{A}^N)\cong \mb{Z}[t_1,\ldots,t_n]$ is standard \cite[Section 3.1]{EG98}. The statement on the class of $[D]$ is used in \cite[Theorem 5.1]{Quadric} and can be proven for example by scaling the coordinates of $\mb{A}^N$ to degenerate to the case where $D$ is defined by a monomial. Then, we reduce to the case where $F$ is simply a coordinate function of $\mb{A}^N$. 
\end{proof}

\begin{comment}
\begin{lem}
\label{sub}
If $D\subset \on{Hom}(\mb{C}^e,\on{Sym}^m\mb{C}^f)$ is a divisor, preserved under the natural actions of $GL(\mb{C}^e)$ and $GL(\mb{C}^f)$, then the equivariant class $[D]$ in $$A^{1}_{GL(\mb{C}^e)\times GL(\mb{C}^f)}(\on{Hom}(\mb{C}^e,\on{Sym}^m\mb{C}^f))$$ is a positive multiple of
\begin{align*}
me\sum \beta_i-f\sum \alpha_i,
\end{align*}
where $\{\alpha_i\}$ and $\{\beta_i\}$ are the standard characters of the standard maximal tori of $GL(\mb{C}^e)$ and $GL(\mb{C}^f)$ respectively. 
\end{lem}
\end{comment}

\begin{lem}
\label{quot}
If $D\subset \on{Hom}(\on{Sym}^m\mb{C}^e,\mb{C}^f)$ is a divisor, preserved under the natural actions of $GL(\mb{C}^e)$ and $GL(\mb{C}^f)$, then the equivariant class $$[D]\in A^{1}_{GL(\mb{C}^e)\times GL(\mb{C}^f)}(\on{Hom}(\on{Sym}^m\mb{C}^e,\mb{C}^f))$$ is a positive multiple of
\begin{align*}
e\sum \beta_i-mf\sum \alpha_i,
\end{align*}
where $\{\alpha_i\}$ and $\{\beta_i\}$ are the standard characters of the standard maximal tori of $GL(\mb{C}^e)$ and $GL(\mb{C}^f)$ respectively. 
\end{lem}

The proof of \Cref{quot} follows from \Cref{principle}.
\begin{proof}[Proof of \Cref{quot}]
Let $T_e$ and $T_f$ be the standard maximal tori of $GL(\mb{C}^e)$ and $GL(\mb{C}^f)$ respectively. The restriction map 
$$A^{1}_{GL(\mb{C}^e)\times GL(\mb{C}^f)}(\on{Hom}(\on{Sym}^m\mb{C}^e,\mb{C}^f))\to A^{1}_{T_e\times T_f}(\on{Hom}(\on{Sym}^m\mb{C}^e,\mb{C}^f))$$
is injective \cite[Proposition 6]{EG98}. 

To determine $[D]$ we apply \Cref{principle}. Let $\alpha_1,\ldots,\alpha_e$ be the standard characters of $T_e$ and let $\beta_1,\ldots,\beta_f$ be the standard characters of $T_f$. Viewing $\on{Hom}(\on{Sym}^m\mb{C}^e,\mb{C}^f)$ as the space of $\binom{e+1}{m}\times f$ matrices, $T_e$ and $T_f$ act by the characters $\{\beta_i-\sum_{j\in S}{\alpha_j}\}$ on the entries, where $i$ ranges from 1 to $f$ and $S$ ranges over multisets of $\{1,\ldots,e\}$ with size $m$. Each monomial term of the hypersurface $F$ defining $D$ in $\on{Hom}(\on{Sym}^m\mb{C}^e,\mb{C}^f)$ has a certain weight $\chi$.

Now, we use the fact that $\chi$ has to be invariant under permutation of the characters $\alpha_i$ and the characters $\beta_i$, which means that it must be
\begin{align*}
e\sum \beta_i-mf\sum \alpha_i
\end{align*}
up to a positive power. 
\end{proof}

Now, we can prove \Cref{dualloci}. 
\begin{proof}[Proof of \Cref{dualloci}]
By \Cref{quot}, the equivariant class
$$[D]\in A^{1}_{GL(\mb{C}^e)\times GL(\mb{C}^f)}(\on{Hom}(\on{Sym}^m\mb{C}^e,\mb{C}^f))\cong \mb{Z}[c_1^e,\ldots,c_e^e]\otimes_{\mathbb{Z}}\mathbb{Z}[ c_1^f,\ldots,c_f^f]$$ 
is a positive rational multiple of $ec_1^f-mfc_1^e$. Let $\mc{D}$ be the divisor $\mc{D}\subset \on{Hom}(\on{Sym}^m\mc{E},\mc{F})$ that restricts fiberwise to $D$. \Cref{lem:equivariantreduction} says the class of $\mc{D}$ is a positive rational multiple of $ec_1(\mc{F})-mfc_1(\mc{E})$. Pulling back $[\mc{D}]$ under the section $\phi: B\to \on{Hom}(\on{Sym}^m\mc{E},\mc{F})$
in the statement of \Cref{dualloci} finishes the proof. 
\end{proof}

\section{Application to Slopes of $\ol{\mc{M}}_g$}
\subsection{Setup}
\label{setup}
In addition to \Cref{dualloci}, we will need to pushforward classes from the moduli stack parameterizing a genus $g$ curve together with a $\mf{g}^{r}_d$. The key ingredients were first written in \cite{Khosla} and \cite[Section 2]{Koszul}. The details of the setup will not be used, and the same setup as already been utilized for computations in \cite{Quadric, Khosla, Koszul,C12}. We will follow \cite[Section 5.1]{C12}.

As a first approximation, we want a stack $\wt{\mc{G}^r_d}$ parameterizing curves with a choice of $\mf{g}^{r}_d$ together with a proper map $\wt{\mc{G}^r_d}\to \ol{\mc{M}_g}$. In order to be able to define the universal line bundle and vector bundle corresponding to choice of sections over $\wt{\mc{G}^r_d}$, we will work instead with $\ol{\mc{M}_{g,1}}$. (This is not strictly necessary, also see the second page of \cite[Section 2]{Koszul}.)

Recall for $g\geq 3$ \cite[Theorem 2]{AC87},
\begin{align*}
    A^{\bullet}(\ol{\mc{M}_{g,1}})\otimes \mb{Q}=\mb{Q}\lambda\oplus \bigoplus_{i=0}^{g-1}{\mb{Q}\delta_i}\oplus \mb{Q}\psi,
\end{align*}
where $\delta_0$ is the class of the irreducible nodal curves $\Delta_0\subset \ol{\mc{M}_{g,1}}$, and $\delta_i$ for $i\geq 1$ is the class of the closure of the reducible nodal curves $\Delta_i\subset \ol{\mc{M}_{g,1}}$ where the component containing the marked point is genus $i$. Also, $\lambda$ is the first chern class of the Hodge bundle and $\psi$ is the relative dualizing sheaf of $\ol{\mc{M}_{g,1}}\to \ol{\mc{M}_{g}}$.

We restrict to an open substack $\wt{\mc{M}_{g,1}}\subset \ol{\mc{M}_{g,1}}$ whose compliment is codimension 2, so this step does not affect divisor calculations. Specifically, we first let $\wt{\mc{M}_{g,1}}$ be the complement of the closure of the locus of two smooth curves intersecting transversely at two points.

There is a Deligne-Mumford stack $\mc{G}^r_d\to \wt{\mc{M}_{g,1}}$ parameterizing the choice of a curve $C$, a rank 1 torsion free sheaf $L$, and an $r+1$-dimensional subspace of the global sections of the sheaf. The torsion free sheaf $L$ is restricted to have degree $d$ on the component of $C$ containing the marked point and zero on the unmarked components. Let $\pi: \mc{C}^{r}_d\to \mc{G}^r_d$ be the universal (quasi-stable) curve. Equivalently, $\mc{C}^{r}_d\to \mc{G}^r_d$ is the pullback of the universal curve over $\wt{\mc{M}_{g}}$ under $\mc{G}^r_d\to \wt{\mc{M}_{g,1}}\to \wt{\mc{M}_{g}}$.

On $\mc{C}^{r}_d$, there is a universal sheaf $\mc{L}$ whose restriction to each fiber of $\pi$ is a torsion-free sheaf with degree $d$ on the component with the marked point and degree zero on the other components. Furthermore, $\mc{L}$ is normalized to be trivial along the marked section of $\pi$. In addition, there is a subbundle $\mc{V}\to \pi_{*}\mc{L}$ that restricts to the marked aspect of the (limit) linear series in each fiber. 

We want to apply \Cref{dualloci} in the case where $\mc{E}=\mc{V}$ and $\mc{F}=\pi_{*}\mc{L}^{\otimes m}$. To do, we need $c_1(\pi_{*}\mc{L}^{\otimes m})$ and we need to know $\pi_{*}\mc{L}^{\otimes m}$ is locally free away from a set of codimension 2.

Unfortunately, $\pi_{*}\mc{L}^{\otimes m}$ jumps in rank over $\Delta_i$ for $i>0$. Therefore, we restrict $\mc{G}^r_d\to \wt{\mc{M}_{g,1}}$ to $\mc{G}^{r,\on{irr}}_d\to \mc{M}^{\on{irr}}_{g,1}$, where $\mc{M}^{\on{irr}}_{g,1}\subset \wt{\mc{M}_{g,1}}$ is the complement of $\Delta_i$ for $i>0$ and $\mc{G}^{r,\on{irr}}_d$ is the inverse image of $\mc{M}^{\on{irr}}_{g,1}$ in $\mc{G}^r_d$. 

Then, $A^{\bullet}(\mc{M}^{\on{irr}}_{g,1})\otimes \mb{Q}=\mb{Q}\lambda\oplus \mb{Q}\delta_0\oplus \mb{Q}\psi$, which means we cannot compute the coefficients of $\delta_i$ for $i>0$. Conjecturally this does not matter for computing the slope of $\ol{\mc{M}_g}$ \cite[Conjecture 1.5]{FP05}.
\subsection{Computation}
By an abuse of notation, let us also refer to the restriction $\mc{C}^{r,\on{irr}}_d\to \mc{G}^{r,\on{irr}}_d$ of $\mc{C}^{r}_d\to \mc{G}^r_d$ as $\pi$ and let $\omega$ be the dualizing sheaf of $\pi$. Then, following \cite{Khosla,Koszul}, we define
\begin{align*}
    \alpha = \pi_{*}(c_1(\mc{L})^2) \qquad \beta = \pi_{*}(c_1(\mc{L})\cap c_1(\omega)) \qquad \gamma = c_1(\mc{V}),
\end{align*}
where $\mc{L}$ and $\mc{V}$ are restricted to $\mc{C}^{r,\on{irr}}_d$ and $\mc{G}^{r,\on{irr}}_d$ respectively. Let $\eta$ be the map $\mc{G}^{r,\on{irr}}_d\to \mc{M}^{\on{irr}}_{g,1}$. 

In order to have $\rho = g-(r+1)(g-d+r)=0$, $g$ needs to be $s(r+1)$ for some $s>1$. Solving for $d$, we have $d=r(s+1)$. Finally, for $(C,L)\in \mc{G}^{r,\on{irr}}_d$ general, we need $\dim(\on{Sym}^{m}H^0(L))\geq \dim(H^0(L^{\otimes m}))$. If $C$ is general, then the Geiseker-Petri theorem implies $h^1(L^{\otimes 2})=0$, so we must require
\begin{align}
\label{NH}
    \binom{r+m}{m}\geq md-g+1.
\end{align}

The following lemma is already contained in \cite[Section 3A]{Khosla}, but we include it for completeness and to correct a typo in the proof. 
\begin{lem}
\label{GRR}
We have $\pi_{*}\mc{L}^{\otimes m}$ is a vector bundle away from a set of codimension at least 2 and $c_1(\pi_{*}\mc{L}^{\otimes m})=\frac{m^2}{2}\alpha - \frac{m}{2}\beta + \eta^{*}(\lambda)$.
\end{lem}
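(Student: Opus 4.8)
The plan is to compute $c_1(\pi_{*}\mc{L}^{\otimes m})$ by Grothendieck--Riemann--Roch applied to $\pi$ and $\mc{L}^{\otimes m}$, after first deleting enough of $\mc{G}^{r,\on{irr}}_d$ that $\mc{L}^{\otimes m}$ and its pushforward are well-behaved. Since we only want the class in $A^1$, we may freely delete any closed substack of codimension $\geq 2$; in particular we delete the locus where $\mc{L}$ is not a line bundle on the fibers of $\pi$ (codimension $\geq 2$, since the generic point of $\Delta_0$ already carries honest line bundles), so from now on $\mc{L}$ is a line bundle.

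The first step is local freeness of $\pi_{*}\mc{L}^{\otimes m}$. By cohomology and base change this holds wherever $h^1$ of the fiber vanishes, so it suffices to show $R^1\pi_{*}\mc{L}^{\otimes m}$ is supported in codimension $\geq 2$. Over the locus of smooth curves, $\mc{L}^{\otimes m}$ restricts to a line bundle of degree $md$, and since $\rho=0$ gives $g=(r+1)s$ and $d=(s+1)r$ we have $md-(2g-2)=2(r-s+1)+(m-2)d$, which is positive: for $m=2$ it equals $2(r-s+1)$ and unwinding \eqref{NH} shows $s\leq r/2$, while for $m\geq 3$ it is at least $3r+2+s(r-2)>0$ since $r\geq 3$. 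Hence $h^1(\mc{L}^{\otimes m})$ vanishes on the smooth fibers, and likewise over the generic point of $\Delta_0$, whose normalization has genus $g-1$ and where $\mc{L}^{\otimes m}$ still pulls back to a bundle of degree $md>2(g-1)-2$. Therefore $R^1\pi_{*}\mc{L}^{\otimes m}$ is supported in codimension $\geq 2$ and $\pi_{*}\mc{L}^{\otimes m}$ is locally free away from a codimension-$2$ locus.

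Next I would run Grothendieck--Riemann--Roch for the family of nodal curves $\pi$: writing $\omega$ for the relative dualizing sheaf and $[\mathrm{Sing}_\pi]$ for the codimension-$2$ class of the nodes of the fibers,
\begin{align*}
\on{ch}(\pi_{!}\mc{L}^{\otimes m}) = \pi_{*}\!\left( \on{ch}(\mc{L}^{\otimes m})\cdot\left( 1 - \tfrac{1}{2}c_1(\omega) + \tfrac{1}{12}\big( c_1(\omega)^2 + [\mathrm{Sing}_\pi] \big) + \cdots \right) \right).
\end{align*}
With $\on{ch}(\mc{L}^{\otimes m}) = 1 + m\,c_1(\mc{L}) + \tfrac{m^2}{2}c_1(\mc{L})^2 + \cdots$, extracting the codimension-$2$ part of the product and pushing forward yields
\begin{align*}
c_1(\pi_{!}\mc{L}^{\otimes m}) = \tfrac{m^2}{2}\,\pi_{*}\!\big( c_1(\mc{L})^2 \big) - \tfrac{m}{2}\,\pi_{*}\!\big( c_1(\mc{L})c_1(\omega) \big) + \tfrac{1}{12}\,\pi_{*}\!\big( c_1(\omega)^2 + [\mathrm{Sing}_\pi] \big),
\end{align*}
that is, $\tfrac{m^2}{2}\alpha-\tfrac{m}{2}\beta$ plus a remaining term. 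Running the identical computation with $\omega$ in place of $\mc{L}^{\otimes m}$ and using $R^1\pi_{*}\omega\cong\mc{O}$ (relative duality and connectedness of the fibers) shows $\tfrac{1}{12}\pi_{*}\big(c_1(\omega)^2+[\mathrm{Sing}_\pi]\big)=c_1(\pi_{*}\omega)=\lambda$; since $\pi$ is pulled back from the universal curve over $\wt{\mc{M}_g}$, this term is $\eta^{*}\lambda$ on $\mc{G}^{r,\on{irr}}_d$. Finally $R^1\pi_{*}\mc{L}^{\otimes m}$ is supported in codimension $\geq 2$, so its first Chern class vanishes and $c_1(\pi_{*}\mc{L}^{\otimes m})=c_1(\pi_{!}\mc{L}^{\otimes m})=\tfrac{m^2}{2}\alpha-\tfrac{m}{2}\beta+\eta^{*}\lambda$.

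The delicate points — and where I would expect the typo in \cite{Khosla} to lie — are the boundary contributions: carrying the node-correction term $[\mathrm{Sing}_\pi]$ through the theorem, checking that $R^1\pi_{*}\mc{L}^{\otimes m}$ genuinely drops out in codimension $1$ (the degree count together with Brill--Noether genericity of the generic point of $\Delta_0$, plus the codimension-$\geq 2$ estimate for the non-line-bundle locus of $\mc{L}$), and keeping track of the pullbacks among $\wt{\mc{M}_g}$, $\mc{M}^{\on{irr}}_{g,1}$ and $\mc{G}^{r,\on{irr}}_d$ so that $\lambda$ becomes $\eta^{*}\lambda$. One should also record that $\psi$ never appears: $\omega$ is the dualizing sheaf of $\pi$, and the whole computation takes place after pulling back from $\wt{\mc{M}_g}$.
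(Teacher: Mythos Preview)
Your proof is correct and follows essentially the same route as the paper: establish $h^1$-vanishing in codimension~$1$ by a degree count, then apply Grothendieck--Riemann--Roch and identify the residual term as $\eta^{*}\lambda$ (you do this by rerunning GRR with $\omega$, the paper by citing Mumford's relation $12\lambda=\kappa+\delta$ from \cite{HM98}---the same computation). One small slip: over the generic point of $\Delta_0$ you pass to the normalization, but $h^1=0$ on $\tilde C$ does not by itself give $h^1=0$ on the nodal curve; simply repeat the degree argument using the arithmetic genus $g$ of $C$ itself (the paper does exactly this, after citing \cite[Proposition 2.3]{Koszul} for local freeness of $L$).
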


\begin{proof}
We first claim that for $(C,L)\in \mc{G}^{r,\on{irr}}_d$, then $h^1(L^{\otimes m})=0$ for degree reasons away from a set of codimension at least 2. This implies $R^1\pi_{*}\mc{L}^{\otimes m}=0$ and $\pi_{*}\mc{L}^{\otimes m}$ is a vector bundle away from a set of codimension at least 2 by Grauert's theorem. First, suppose $C$ is smooth. If $m=2$, then $2d-2g+2=2(r-s+1)$. This is greater than zero as $s\leq \frac{r}{2}$ (which is equivalent to \eqref{NH} when $m=2$). If $m\geq 3$, we note 
$$md-2g-2\geq 3rs+3r-2rs-2s+2=rs+3r-2s+2=(r-2)(s+3)+8\geq 0.$$ 
Now, if $C$ is a general irreducible nodal curve, then \cite[proof of Proposition 2.3 (2)]{Koszul}, together with our assumption that the Brill-Noether number is zero, says that $L$ is locally free, and we can repeat the same argument above to see $h^1(L^{\otimes m})=0$. 

To apply Grothendieck Riemann-Roch, we need the Todd class of $\pi$. This is pulled back from the Todd class of $\ol{\mc{M}_{g,1}}\to \ol{\mc{M}_{g}}$, which is computed in \cite[page 158]{HM98}. Applying Grothendieck Riemann-Roch yields
\begin{align*}
    c_1(\pi_{*}\mc{L}^{\otimes m})&= \pi_{*}\left[(1+mc_1(\mc{L})+\frac{m^2}{2}c_1(\mc{L})^2)(1-\frac{1}{2}c_1(\omega)+\frac{c_1(\omega)^2+[Z]}{12}\right]_{2}\\
    &= \frac{m^2}{2}\alpha - \frac{m}{2}\beta+\eta^{*}\frac{\pi_{*}c_1(\omega)^2+\delta}{12}
\end{align*}
where $Z\subset \mc{C}^{r,\on{irr}}_d$ is the singular locus of $\pi: \mc{C}^{r,\on{irr}}_d\to \mc{G}^{r,\on{irr}}_d$. At this point, we use the fact that the universal curve $\pi: \mc{C}^{r,\on{irr}}_d\to \mc{G}^{r,\on{irr}}_d$ is pulled back from $\mc{G}^r_d\to \wt{\mc{M}_{g,1}}\to \wt{\mc{M}_{g}}$. This means $\pi_{*}c_1(\omega)^2$ is the pullback of the $\kappa$ divisor class on $\wt{\mc{M}_{g}}$ under $\mc{G}^r_d\to \wt{\mc{M}_{g,1}}\to \wt{\mc{M}_{g}}$. Using the relation $\frac{\kappa+\delta}{12}=\lambda$ \cite[page 158]{HM98}, we have $\eta^{*}\frac{\pi_{*}c_1(\omega)^2+\delta}{12}=\eta^{*}\lambda$, resulting in the claimed formula in \Cref{GRR}. 
\end{proof}

\begin{thm}
[{\cite[Theorem 2.11]{Khosla}}]
\label{pushforward}
Choose $g,r,d\geq 1$ integers such that $\rho = g-(r+1)(g-d+r)=0$.
Then, pushing forward under $\eta: \mc{G}^{r,\on{irr}}_d\to \mc{M}^{\on{irr}}_{g,1}$, we have
\begin{align*}
    \frac{6(g-1)(g-2)}{dN} \eta_{*}\alpha =&6 (gd - 2 g^2 + 8 d - 8 g + 4) \lambda + (2g^2 - gd + 3g - 4d - 
    2) \delta_0  \\&- 6 d (g - 2) \psi\\
    \frac{2 (g - 1)}{Nd}\eta_{*}\beta =& 12\lambda - \delta_0  - 2 (g - 1) \psi\\
    \frac{2(g-1)(g-2)}{N}\eta_{*}\gamma =& ((-(g + 3) \xi + 5 r (r + 2)) \lambda - d (r + 1) (g - 2) \psi + \\*&  \frac{1}{6} ((g + 1) \xi - 3 r (r + 2)) \delta_0 ),
\end{align*}
\end{thm}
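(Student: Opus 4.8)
The plan is to compute $\eta_*\alpha$, $\eta_*\beta$ and $\eta_*\gamma$ — which are divisor classes on $\mc{M}^{\on{irr}}_{g,1}$ — by pairing them against test curves. Since $\rho=0$, the map $\eta$ is generically finite; write $N$ for its degree (the Castelnuovo number), so that $\eta_*\eta^*=N$ on divisor classes. Because $A^1(\mc{M}^{\on{irr}}_{g,1})\otimes\mb{Q}$ is spanned by $\lambda$, $\delta_0$, $\psi$, it suffices, for each of the three classes, to evaluate it against three curves in $\mc{M}^{\on{irr}}_{g,1}$ on which $(\lambda,\delta_0,\psi)$ take independent values, by describing the restriction of $\mc{G}^{r,\on{irr}}_d$ over each such curve and integrating $\alpha$, $\beta$, $\gamma$ there.

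The first test curve is $B_\psi$: fix a general curve $C$ of genus $g$ and let the marked point move, so that $\lambda\cdot B_\psi=\delta_0\cdot B_\psi=0$ and $\psi\cdot B_\psi=2g-2$. Since $C$ is general with $\rho=0$ and $W^{r+1}_d(C)=\emptyset$, its $\mf{g}^r_d$'s form a reduced set of $N$ points, so $\mc{G}^{r,\on{irr}}_d$ restricts to $N$ disjoint copies of $C$; on the copy indexed by $(L,V)$ the universal curve is $C\times C\to C$ with marked section the diagonal, and the normalization of $\mc{L}$ along the marked section forces $\mc{L}\cong p_2^*L\otimes\pi^*L^{-1}$ and $\mc{V}=\pi_*\mc{L}$. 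A short Chern-class computation on $C\times C$ then gives the degrees of $\alpha$, $\beta$, $\gamma$ on each component, hence the coefficient of $\psi$ in each of $\eta_*\alpha$, $\eta_*\beta$, $\eta_*\gamma$.

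For the coefficients of $\lambda$ and $\delta_0$ I would use two further general pencils of genus-$g$ curves on surfaces, each with a chosen section; for such a family $\mc{X}\to B$ the numbers $\lambda\cdot B$, $\delta_0\cdot B$ (the count of irreducible nodal fibers) and $\psi\cdot B$ are classical, and $\mc{G}^{r,\on{irr}}_d$ pulls back to a family finite of degree $N$ away from the finitely many nodal fibers. The real work is to understand it over those fibers via Eisenbud--Harris limit linear series: after a further degeneration to a flag curve — a chain of elliptic curves with a rational backbone, as Khosla does — the limit $\mf{g}^r_d$'s are enumerated combinatorially (this recovers $N$ as a sum over lattice paths / Young tableaux), and tracking their vanishing sequences at the nodes pins down the twist of $\mc{L}$ onto each branch, the rank-jump locus of $\pi_*\mc{L}^{\otimes m}$, and the resulting contributions of the singular locus $Z\subset\mc{C}^{r,\on{irr}}_d$ to $\alpha=\pi_*c_1(\mc{L})^2$, $\beta=\pi_*(c_1(\mc{L})c_1(\omega))$ and $\gamma=c_1(\mc{V})$. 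Pushing forward the Grothendieck--Riemann--Roch identity of \Cref{GRR} under $\eta$ gives a linear relation among $\eta_*\alpha$, $\eta_*\beta$ and $\lambda$ that the final answer must satisfy, serving as a consistency check; solving the resulting linear systems produces the stated formulas, with the denominators $\frac{6(g-1)(g-2)}{dN}$, etc., appearing as the product of $N$ with the intersection-pairing normalizations on the test families.

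The main obstacle is exactly this boundary analysis: one must take the pencils general enough that every limiting linear series is refined and occurs with its expected multiplicity, carry out the enumeration on the flag curve, and then keep exact account of how the universal sheaf and subbundle degenerate — including the normalization along the marked section and the compactified linear-series structure of \cite[Proposition 2.3]{Koszul} that keeps $\mc{L}$ locally free on a general irreducible nodal curve — since it is precisely these twists that separate the coefficient of $\delta_0$ from that of $\psi$. Everything after that is routine intersection theory on surfaces together with linear algebra.
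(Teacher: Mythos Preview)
The paper gives no proof of this statement: it is quoted as \cite[Theorem 2.11]{Khosla} and used entirely as a black box in the proof of \Cref{main}. So there is nothing here to compare your argument against.

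That said, your outline does match the strategy of the cited source. The paper itself hints at this in \Cref{char}, noting that for \cite[Lemma 4.5]{Khosla} ``Khosla degenerates further to a comb of elliptic curves with a rational backbone,'' which is exactly the flag-curve degeneration you invoke; and the test family $B_\psi$ and the use of general pencils with limit linear series at the nodal fibers are the standard moves in \cite{Khosla,Koszul}. As a proof, however, what you have written is only a roadmap: you correctly identify ``the main obstacle'' as the boundary enumeration and multiplicity tracking on the flag curve, but you do not actually carry any of it out, and that computation is the entire substance of the theorem --- the specific polynomials in $g,d,r$ and the constant $\xi$ do not fall out of the architecture you describe without several pages of combinatorics on vanishing sequences. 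So your proposal is a faithful sketch of the right method, but it is not a proof, and the present paper makes no attempt at one either.
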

where 
\begin{align*}
    N&=\frac{g!\prod_{i=1}^r i!}{\prod_{i=0}^{r}(g-d+r+i)}(=\deg(\eta))\\
    \xi &= 3(g-1)+\frac{(r-1)(g+r+1)(3g-2d+r-3)}{g-d+2r+1}.
\end{align*}

\begin{proof}[Proof of \Cref{main}]
Following the notation of \Cref{setup}, apply \Cref{dualloci} in the case where $\mc{E}=\mc{V}$ and $\mc{F}=\pi_{*}\mc{L}^{\otimes k}$ and $\pi: \mc{C}^{r,\on{irr}}_d\to \mc{G}^{r,\on{irr}}_d$. This yields a positive multiple of
\begin{align}
\label{class}
    %(r + 1)(\frac{m^2}{2}a - \frac{m}{2}b + \sigma^{*}\lambda) - m(md - g + 1) \gamma
    (r + 1)(\frac{m^2}{2}\alpha - \frac{m}{2}\beta + \sigma^{*}\lambda) - m(md - g + 1) \gamma
\end{align}
on $\mc{G}^{r,\on{irr}}_d$. Since we only care about the slope, we can scale by a constant factor and work with \eqref{class}. We push forward \eqref{class} via $\eta: \mc{G}^{r,\on{irr}}_d\to \mc{M}^{\on{irr}}_{g,1}$ using \Cref{pushforward} to get a class $a\lambda+b_0\delta_0+c\psi$. This yields $c=0$ (as expected) and rather complicated formulas for $a$ and $b_0$. Checking these formulas using Mathematica yields the three statements of \Cref{main}. For more details, the interested reader can refer to \Cref{grind}.
\end{proof}

\section{Acknowledgements}
The author would like to thank Joe Harris for introducing the author to moduli spaces of curves and for helpful discussions. The author would also like to thank Aleksei Kulikov for the reference \cite{F08} on the density of the candidate counterexamples to the Slope Conjecture, Anand Patel for the alternative proof of \Cref{dualloci} presented, and Gavril Farkas for helpful comments.
\appendix
\section{Mathematica computation}
\label{grind}
\begin{proof}[Proof of \Cref{main} continued]

Continuing the proof of \Cref{main}, we find
\begin{polynomial}
a=\frac{N}{2 (r+s+1) (r s+s-2) (r s+s-1)}(m^2 r^5 s^4-m^2 r^5 s^2+3 m^2 r^4 s^4+5 m^2 r^4 s^3+m^2 r^4 s^2-m^2 r^4 s+m^2 r^3 s^4+12 m^2 r^3 s^3+13 m^2 r^3 s^2-2 m^2 r^3 s-4 m^2 r^3-3 m^2 r^2 s^4-5 m^2 r^2 s^3+m^2 r^2 s^2+3 m^2 r^2 s-2 m^2 r s^4-12 m^2 r s^3-14 m^2 r s^2+4 m^2 r-m r^5 s^4+m r^5 s^2-5 m r^4 s^4-5 m r^4 s^3-m r^4 s^2+m r^4 s-9 m r^3 s^4-26 m r^3 s^3-13 m r^3 s^2+22 m r^3 s+4 m r^3-7 m r^2 s^4-37 m r^2 s^3-5 m r^2 s^2+57 m r^2 s-2 m r s^4-16 m r s^3+6 m r s^2+40 m r s-4 m r+2 r^4 s^2+2 r^3 s^3+8 r^3 s^2-6 r^3 s+6 r^2 s^3+6 r^2 s^2-18 r^2 s+4 r^2+6 r s^3-4 r s^2-14 r s+8 r+2 s^3-4 s^2-2 s+4)\\
b_0=-\frac{N}{12 (r+s+1) (r s+s-2) (r s+s-1)}m r (r+1) (s+1) (m r^3 s^3-m r^3 s^2+2 m r^2 s^3+m r^2 s^2-m r s^3+5 m r s^2+m r s-2 m r-2 m s^3-5 m s^2-m s+2 m-r^3 s^3+r^3 s^2-4 r^2 s^3+r^2 s^2-5 r s^3-7 r s^2+7 r s+2 r-2 s^3-7 s^2+17 s-2)
\end{polynomial}
This yields $-\frac{a}{b_0}$ as a complicated rational function $F(m,r,s)$ for the slope $s_0(D_{m,r,s})$. We now prove each case individually. Recall in each case $g=(r+1)s$ and $d=r(s+1)$. 
\begin{proof}[Proof of Part 1 of \Cref{main}]
Consider $F(m,r,s)-(6+\frac{12}{g+1})$. This again is a complicated rational function $G(m,r,s)$ in $m,r,s$. To see $G(m,r,s)\geq 0$ if $m\geq 3$, $r\geq 3$, $s\geq 1$ subject to the constraint $\binom{r+m}{m}-(dm-g+1)\geq 0$, we first note that \begin{polynomial}
G(m+4,r+4,s+1)=(6 (6+r+s) (3+r+5 s+r s) (4+r+5 s+r s) (156+120 m+24 m^2+110 r+78 m r+14 m^2 r+18 r^2+12 m r^2+2 m^2 r^2+2 s+36 m s+12 m^2 s+24 r s+29 m r s+7 m^2 r s+6 r^2 s+5 m r^2 s+m^2 r^2 s))/((4+m) (4+r) (5+r) (2+s) (6+r+5 s+r s) (162+54 m+81 r+27 m r+9 r^2+3 m r^2+513 s+207 m s+330 r s+120 m r s+58 r^2 s+20 m r^2 s+3 r^3 s+m r^3 s+543 s^2+237 m s^2+410 r s^2+154 m r s^2+89 r^2 s^2+31 m r^2 s^2+6 r^3 s^2+2 m r^3 s^2+210 s^3+90 m s^3+167 r s^3+63 m r s^3+40 r^2 s^3+14 m r^2 s^3+3 r^3 s^3+m r^3 s^3))
\end{polynomial}
\noindent is clearly positive. To deal with the edge cases when $m=3$ or $r=3$, we first find
\begin{polynomial}
G(3,r+3,s+1)=(2 (5+r+s) (2+r+4 s+r s) (3+r+4 s+r s) (11+15 r+4 r^2-11 s-r s+r^2 s))/((3+r) (4+r) (2+s) (5+r+4 s+r s) (30+21 r+3 r^2+66 s+70 r s+16 r^2 s+r^3 s+52 s^2+76 r s^2+23 r^2 s^2+2 r^3 s^2+20 s^3+29 r s^3+10 r^2 s^3+r^3 s^3)).
\end{polynomial}
The only factor of $G(3,r+3,s+1)$ that can be negative is $(11 + 15 r + 4 r^2 - 11 s - r s + r^2 s)$. Now, we use the constraint $\binom{r+m}{m}-(dm-g+1)\geq 0$. Substituting $m\to 3$ yields $\frac{r^3}{6}+r^2-2 r s-\frac{7 r}{6}+s\geq 0$, so $s\leq \frac{r^3+6 r^2-7 r}{6 (2 r-1)}$. Plugging in $s=\frac{(r+3)^3+6 (r+3)^2-7 (r+3)}{6 (2 (r+3)-1)}-1$ into $(11 + 15 r + 4 r^2 - 11 s - r s + r^2 s)$ yields $\frac{r (r+1) (r+4) \left(r^2+9 r+17\right)}{6 (2 r+5)}$, which is nonnegative. Furthermore, this is zero only when $r=0$. Therefore, we have $G(3,r,s)\geq 0$ for $r\geq 3, s\geq 1$ and equality can hold only if $r=3$. In this case, $s\leq \frac{r^3+6 r^2-7 r}{6 (2 r-1)}=2$. Plugging in $s=1,2$ yields $G(3,3,1)>0$ and $G(3,3,2)=0$. 

Now, we are left with the case $r=3$, $m\geq 4$ and $s\geq 1$. Note
\begin{polynomial}
    G(m+5,3,s+1)= ((5+s) (1+2 s) (3+4 s) (65+39 m+6 m^2+s+12 m s+3 m^2 s))/((5+m) (2+s) (5+4 s) (60+15 m+172 s+53 m s+164 s^2+56 m s^2+60 s^3+20 m s^3))
\end{polynomial}
\noindent is clearly positive, so we are left with the case $r=3$, $m= 4$ and $s\geq 1$. Since $$\binom{3+4}{4}-(4d-g+1)\geq 0\Leftrightarrow 22-8s\geq 0,$$
so our remaining candidates are $(m,r,s)=(4,3,1)$ or $(4,3,2)$. We evaluate
\begin{align*}
    G(4,3,s+1)&= -\frac{2 (s-5) (s+4) (2 s-1) (4 s-1)}{(s+1) (4 s+1) \left(40 s^3-12 s^2+23 s-6\right)}
\end{align*}
and note that it is positive for $s=1,2$. Tracing through the cases, we find $G(m,r,s)\geq 0$ for $m\geq 3,r\geq 3,s\geq 1$ subject to the constraint $\binom{r+m}{m}-(dm-g+1)\geq 0$, and equality holds when $(m,r,s)=(3,3,2)$. 
\end{proof}
\begin{proof}[Proof of Part 2 of \Cref{main}]
Define $G(m,r,s)=F(m,r,s)-(6+\frac{8}{g+1})$. We want to see $G(m,r,s)> 0$ if $m\geq 2$, $r\geq 3$, $s\geq 1$ subject to the constraint $\binom{r+m}{m}-(dm-g+1)\geq 0$. First note
\begin{polynomial}
G(m+2,r+5,s+1)=(2 (30240+51240 m+26880 m^2+27168 r+48018 m r+25176 m^2 r+9774 r^2+17994 m r^2+9390 m^2 r^2+1770 r^3+3378 m r^3+1746 m^2 r^3+162 r^4+318 m r^4+162 m^2 r^4+6 r^5+12 m r^5+6 m^2 r^5+76896 s+179100 m s+102960 m^2 s+79264 r s+171950 m r s+94152 m^2 r s+31166 r^2 s+64661 m r^2 s+34067 m^2 r^2 s+5928 r^3 s+11947 m r^3 s+6101 m^2 r^3 s+550 r^4 s+1087 m r^4 s+541 m^2 r^4 s+20 r^5 s+39 m r^5 s+19 m^2 r^5 s+61560 s^2+213960 m s^2+132360 m^2 s^2+79312 r s^2+211992 m r s^2+119522 m^2 r s^2+35270 r^2 s^2+81050 m r^2 s^2+42560 m^2 r^2 s^2+7224 r^3 s^2+15042 m r^3 s^2+7470 m^2 r^3 s^2+700 r^4 s^2+1360 m r^4 s^2+646 m^2 r^4 s^2+26 r^5 s^2+48 m r^5 s^2+22 m^2 r^5 s^2+21048 s^3+109500 m s^3+68280 m^2 s^3+36976 r s^3+112180 m r s^3+61426 m^2 r s^3+18634 r^2 s^3+43891 m r^2 s^3+21769 m^2 r^2 s^3+4096 r^3 s^3+8282 m r^3 s^3+3798 m^2 r^3 s^3+416 r^4 s^3+758 m r^4 s^3+326 m^2 r^4 s^3+16 r^5 s^3+27 m r^5 s^3+11 m^2 r^5 s^3+7056 s^4+24120 m s^4+12240 m^2 s^4+10200 r s^4+24564 m r s^4+11028 m^2 r s^4+4828 r^2 s^4+9598 m r^2 s^4+3916 m^2 r^2 s^4+1034 r^3 s^4+1815 m r^3 s^4+685 m^2 r^3 s^4+104 r^4 s^4+167 m r^4 s^4+59 m^2 r^4 s^4+4 r^5 s^4+6 m r^5 s^4+2 m^2 r^5 s^4))/((2+m) (5+r) (6+r) (2+s) (7+r+6 s+r s) (84+84 m+33 r+33 m r+3 r^2+3 m r^2+208 s+348 m s+129 r s+163 m r s+21 r^2 s+23 m r^2 s+r^3 s+m r^3 s+200 s^2+424 m s^2+162 r s^2+222 m r s^2+33 r^2 s^2+37 m r^2 s^2+2 r^3 s^2+2 m r^3 s^2+84 s^3+168 m s^3+68 r s^3+94 m r s^3+15 r^2 s^3+17 m r^2 s^3+r^3 s^3+m r^3 s^3)),
\end{polynomial}
\noindent which is clearly positive. This leaves the cases when $r=3$ and $r=4$. To deal with the case $r=4$, we evaluate
\begin{polynomial}
G(m+2,4,s+2)=(7560+31584 m+20832 m^2+7561 s+55078 m s+37970 m^2 s+2083 s^2+35878 m s^2+25162 m^2 s^2+335 s^3+10610 m s^3+7190 m^2 s^3+125 s^4+1250 m s^4+750 m^2 s^4)/(5 (2+m) (3+s) (11+5 s) (84+196 m+109 s+317 m s+53 s^2+169 m s^2+10 s^3+30 m s^3))\\
G(m+2,4,1)=\frac{2 \left(11 m^2+21 m+13\right)}{15 (m+1) (m+2)}.
\end{polynomial}
To deal with the case $r=3$, we evaluate 
\begin{polynomial}
G(m+3,3,s+1)=(885+825 m+210 m^2+2362 s+2895 m s+847 m^2 s+2007 s^2+3410 m s^2+1119 m^2 s^2+642 s^3+1640 m s^3+582 m^2 s^3+152 s^4+320 m s^4+104 m^2 s^4)/((3+m) (2+s) (5+4 s) (30+15 m+66 s+53 m s+52 s^2+56 m s^2+20 s^3+20 m s^3)),
\end{polynomial}
\noindent which reduces us to the case $r=3$, $m=2$. Now, we use the bound $\binom{3+2}{2}-(2d-g+1)\geq 0\Leftrightarrow 3-2s\geq 0$. Plugging in $G(2,3,1)>0$ finishes this case. 
\end{proof}
\begin{proof}[Proof of Part 3 of \Cref{main}]
Define $G(m,r,s)=F(m,r,s)-(6+\frac{12}{g+1})$. We want to see when $G(m,r,s)< 0$ if $m=2$, $r\geq 3$, $s\geq 1$ subject to the constraint $\binom{r+m}{m}-(dm-g+1)\geq 0$. First, note 
\begin{align*}
    \binom{r+m}{m}-(dm-g+1)\geq 0\Leftrightarrow s\leq \frac{r}{2},
\end{align*}
which is one of the constraints claimed in Part 3 of \Cref{main}. Next, we evaluate 
\begin{polynomial}
G(2,r,s)=(6 (1 + r + s) (1 + r^2 + s - 3 r s) (-2 + s + r s) (-1 + s + 
   r s))/(r (1 + r) (1 + s) (1 + s + r s) (2 - 2 r + 15 s + 9 r s - 
   17 s^2 + 3 r s^2 + 3 r^2 s^2 - r^3 s^2 - 6 s^3 - 7 r s^3 + r^3 s^3)
  )\\
G(2,3,s+1)=\frac{(s+5) (2 s+1) (4 s-1) (4 s+3)}{(s+2) (4 s+5) \left(4 s^2-13 s-15\right)}\\
G(2,r+4,s+1)=(6 (6+r+s) (6+5 r+r^2-11 s-3 r s) (3+r+5 s+r s) (4+r+5 s+r s))/((4+r) (5+r) (2+s) (6+r+5 s+r s) (54+27 r+3 r^2+99 s+90 r s+18 r^2 s+r^3 s+69 s^2+102 r s^2+27 r^2 s^2+2 r^3 s^2+30 s^3+41 r s^3+12 r^2 s^3+r^3 s^3)).
\end{polynomial}
\noindent Therefore, if $r\geq 4$, then $G(2,r,s)<0$ if and only if 
\begin{align*}
    1 + r^2 + s - 3 r s<0\Leftrightarrow s>\frac{r^2+1}{3 r-1}.
\end{align*}
%If $r=3$, the same holds so long as we check the term $4 s^2-13 s-15$ is positive. But if $r=3$, then $s\leq \frac{3}{2}$, so $s=1$ and $G(2,3,1)>0$. 
If $r=3$, then $s\leq \frac{3}{2}$, so $s=1$ and $G(2,3,1)>0$.
\end{proof}
\end{proof}

\section{Classification of the invariant divisors}
\label{classdivisors}
Given vector spaces $V$ and $W$, it is natural to ask for a classification of divisors $D\subset \on{Hom}(\on{Sym}^m(V),W)$ invariant under the action of $GL(V)\times GL(W)$. First, let us argue that these invariant divisors actually exist. 

\begin{prop}
If $V$ and $W$ are vector spaces with $\dim(V)\geq 4$ and $\dim(\on{Sym}^m V)\geq \dim(W)$, then there exists a divisor $D\subset \on{Hom}(\on{Sym}^m(V), W)$ that is invariant under the action of $GL(V)\times GL(W)$.
\end{prop}

\begin{proof}
If $\dim(\on{Sym}^m(V))= \dim(W)$, then $D$ can be chosen to be the linear maps not of full rank. In fact, this is the unique choice for $D$ in this case, as the $GL(W)$ orbit of a nonsingular matrix is all nonsingular matrices in the space of square matrices.

If $\dim(\on{Sym}^m(V))= \dim(W)+1$, then we can choose $D$ to consist of linear maps whose kernel contains a nonzero homogenous form defining a singular hypersurface in $\mathbb{P}(V^{*})$. 
%If $\dim(W)=1$, $\on{Hom}(\on{Sym}^m(V),W)=\on{Sym}^m(V)^{*}$, so we can choose $D$ to be the invariant divisor consisting of homogenous forms in $V^{*}$ defining a singular hypersurface. 

If $\dim(\on{Sym}^m(V))\geq \dim(W)+2$, then a computation shows a general $GL(V)\times GL(W)$ orbit is codimension at least 1 in $\on{Hom}(\on{Sym}^m(V),W)$ for dimension reasons, so we can let $D$ be the closure of a union of a family of $GL(V)\times GL(W)$ orbits. 

More explicitly, let $\Lambda_1$ be a general 1-dimensional vector subspace of $\on{Hom}(\on{Sym}^m(V),W)$. The codimension of $(GL(V)\times GL(W))\cdot \Lambda_1$ is at least
\begin{align*}
\dim(\on{Hom}(\on{Sym}^{m}V,W))-\dim(GL(V))-\dim(GL(W))+1&=\\
\left(\binom{\dim(V)-1+m}{m}-\dim(W)\right)\dim(W)-\dim(V)^2+1&\geq\\
\left(\binom{\dim(V)+1}{2}-2\right)2-\dim(V)^2+1&=\dim(V)-3\geq 1.
\end{align*}
If the codimension of $(GL(V)\times GL(W))\cdot \Lambda_1$ is precisely 1, then we can let $D$ be its closure and we have produced a $GL(V)\times GL(W)$-invariant divisor inside $\on{Hom}(\on{Sym}^m(V),W)$. Otherwise, we let $\Lambda_2$ be the span of $\Lambda_1$ together with a general point of $\on{Hom}(\on{Sym}^m(V),W)$. Then, $(GL(V)\times GL(W))\cdot \Lambda_2$ has dimension precisely one greater than the dimension of $(GL(V)\times GL(W))\cdot \Lambda_1$. Repeating this process, we eventually obtain a general linear space $\Lambda_i\subset \on{Hom}(\on{Sym}^m(V),W)$ such that the closure of the union of orbits $(GL(V)\times GL(W))\cdot \Lambda_i$ is a $GL(V)\times GL(W)$-invariant divisor inside $\on{Hom}(\on{Sym}^m(V),W)$.
\begin{comment}
\begin{align*}
\dim(\on{Hom}(\on{Sym}^{m}V,W))-\dim(GL(V))-\dim(GL(W))&=\\
(\binom{\dim(V)-1+m}{m}-\dim(W))\dim(W)-\dim(V)^2&\geq\\
(\binom{\dim(V)+1}{2}-2)2-\dim(V)^2&=\dim(V)-4.
\end{align*}
\end{comment}
\end{proof}

%One can always choose $D$ by taking the closure of a suitably large family of orbits as follows. Let $e=md-g+1$. If $\binom{r+m}{m}=e$, we choose $D$ to be the linear maps not of full rank. If $\binom{r+m}{m}=e+1$, we can choose $D$ to be linear maps whose kernel defines a singular hypersurface in $\mb{P}^r$. If $\binom{r+m}{m}\geq e+2$, then one can check $\dim(\on{Hom}(\on{Sym}^m\mb{C}^{r+1},\mb{C}^{e}))\geq \dim(GL(\mb{C}^{r+1}))+\dim(GL(\mb{C}^{e}))$, so a general orbit in $\on{Hom}(\on{Sym}^m\mb{C}^{r+1},\mb{C}^{e})$ has codimension at least 2. %$\dim(\on{Hom}(\on{Sym}^m\mb{C}^{r+1},\mb{C}^{e}))-\dim(GL(\mb{C}^{r+1}))-\dim(GL(\mb{C}^{e}))\geq (\binom{r+m}{m}-e)e-(r+1)^2$. Since $e\geq 2d-g+1=rs+2r-s+1>1$ and $\binom{r+m}{m}-e\geq 2$, $(\binom{r+m}{m}-e)e-(r+1)^2\geq (\binom{r+2}{2}-2)2-(r+1)^2=r-3\geq 0$. 

Next, if one is interested in showing the divisors $D_{m,r,s}$ defined in \Cref{Definitions} are not all virtual, then it would be good to understand the intersection $Z\subset \on{Hom}(\on{Sym}^m(V),W)$ of all $GL(V)\times GL(W)$ invariant divisors. \Cref{prop:GIT} follows essentially by definition.

\begin{prop}
\label{prop:GIT}
The intersection $Z$ of all $GL(V)\times GL(W)$ invariant divisors $D\subset \on{Hom}(\on{Sym}^m(V), W)$ is the locus of GIT unstable points in $\on{Hom}(\on{Sym}^m(V), W)$ under the action of the subgroup $SL(V)\times GL(W)$, where the trivial bundle is linearized by the character sending $(A,B)\in SL(V)\times GL(W)$ to $\det(B)$. 
\end{prop}
 
\begin{proof}
%to continue: clean this up. Unclear point is that GIT depends on linearization, but only up the a tensor power. There is only one linearization up to a tensor power once you replace GL(V) with SL(V).
A divisor on the affine space $\on{Hom}(\on{Sym}^m(V), W)$ is given by the vanishing locus of a polynomial $F$ in $\dim(\on{Sym}^m(V))\dim(W)$ variables, where the action of $GL(V)\times GL(W)$ on $F$ is by a character. Having the group $GL(V)\times GL(W)$ acting on $F$ by a character is equivalent to the subgroup $SL(V)\times GL(W)$ acting by a character. 

Let $L$ be the trivial bundle on $\on{Hom}(\on{Sym}^m(V), W)$, where $SL(V)\times GL(W)$ acts on $L$ by the character of $SL(V)\times GL(W)$ sending $(A,B)\in SL(V)\times GL(W)$ to $\det(B)$. Then, all divisors on $\on{Hom}(\on{Sym}^m(V), W)$ are given by the $SL(V)\times GL(W)$-invariant sections $H^0(L^{\otimes m})$ as $m$ ranges over all nonnegative integers. 

The common vanishing locus of all these sections is by definition the locus of GIT unstable points in $\on{Hom}(\on{Sym}^m(V), W)$ under the action of the subgroup $SL(V)\times GL(W)$.
\end{proof}
The special linear group is its own commutator subgroup (except in the case $SL_2(\mathbb{F}_2)\cong S_3$ and $SL_2(\mathbb{F}_3)$) \cite[Theorems 8.2 and 9.3]{L02} and we are working over an algebraically closed field of characteristic zero. Thus, a character on $SL(V)\times GL(W)$ must send $(A,B)\in SL(V)\times GL(W)$ to an integral power of $\det(B)$, so all nontrivial choices of linearization of the trivial bundle on $\on{Hom}(\on{Sym}^m(V), W)$ in the context of GIT in \Cref{prop:GIT} are equivalent. %Look up this fact, it seems to not hold for GL_2(F) for char(F)=2 (https://math.stackexchange.com/questions/2714138/one-dimensional-representations-of-gln-k). I think the words to google are: SL(K) is special and hence is its own commutator. 

To understand the GIT unstable points in \Cref{prop:GIT}, one can first quotient by $GL(W)$ to get the Grassmannian $\on{Gr}(\dim(W),\on{Sym}^m(V))$ of quotients and look at the GIT unstable points under the action of $SL(V)$. The semistable locus of $\on{Gr}(\dim(W),\on{Sym}^m(V))$ under the action of $SL(V)$ has appeared in the study of associated forms, for example in \cite{AI18,F17,FI19}.

\bibliographystyle{alpha}
\bibliography{references.bib}
\end{document}